\newtheorem{thm}{Theorem}[section]
\newtheorem{lem}[thm]{Lemma}
\newtheorem{prop}[thm]{Proposition}
\newtheorem{defn}[thm]{Definition}
\newtheorem{rem}[thm]{Remark}
\newtheorem{ex}[thm]{Example}
\numberwithin{equation}{section}
\DeclareMathOperator{\lin}{lin}
\begin{document}

\title{The classification of non-characteristically nilpotent
 filiform Leibniz algebras}
\author{ A.Kh. Khudoyberdiyev, M. Ladra and B.A. Omirov}
\address{[M. Ladra] Department of Algebra, University of Santiago de Compostela, 15782, Spain.}
\email {manuel.ladra@usc.es}
\address{[A.Kh. Khudoyberdiyev -- B.A. Omirov] Institute of Mathematics, 29,
Do'rmon yo'li srt., 100125, Tashkent (Uzbekistan)}
\email{khabror@mail.ru
--- omirovb@mail.ru}

\subjclass[2010]{17A32, 17A36, 17B30}

\keywords{Lie algebra, Leibniz algebra,
derivation, nilpotency, characteristically nilpotent algebra,
Catalan numbers}

\begin{abstract}
In this paper we investigate the derivations of filiform Leibniz
algebras. Recall that the set of filiform Leibniz algebras of
fixed dimension is decomposed into three non-intersected families.
We found sufficient conditions under which filiform Leibniz
algebras of the first family are characteristically nilpotent.
Moreover, for the first family we classify non-characteristically
nilpotent algebras by means of Catalan numbers. In addition, for
the rest two families of filiform Leibniz algebras we describe
non-characteristically nilpotent algebras, i.e., those filiform
Leibniz algebras which lie in the complementary set to those
characteristically nilpotent.
\end{abstract}

\maketitle


\section{Introduction} \label{S:intro}

In 1955, Jacobson \cite{Jac2} proved that every Lie algebra over a
field of characteristic zero admitting a non-singular derivation
is nilpotent. The problem whether the inverse of this statement is
correct remained open until that an example of an 8-dimensional
nilpotent Lie algebra whose all derivations are nilpotent was
constructed in \cite{DiLi}. They called such type of algebras
characteristically nilpotent Lie algebras.

If all derivations of an algebra are nilpotent (inner derivations
are nilpotent, as well), then by Engel's theorem we conclude that
a characteristically nilpotent Lie algebra is nilpotent. Inverse
statement is not true, because there exist nilpotent Lie algebras
admitting non-nilpotent derivations. Therefore, the subset of
characteristically nilpotent Lie algebras is strictly embedded
into the set of nilpotent Lie algebras.

Papers \cite{CaNu,Kha1,LeTo} and
others are devoted to the investigation of characteristically nilpotent Lie algebras.
 The classification of nilpotent Lie algebras till
dimension 8 shows that there is no characteristically nilpotent
Lie algebras in dimensions less than 7. Moreover, it is shown
that there exist characteristically nilpotent Lie algebras in each
dimension from 7 till 13-dimensional. Taking into account that
a direct sum of characteristically nilpotent Lie algebras is
characteristically nilpotent, then we have the existence of
characteristically nilpotent Lie algebras in each finite dimension
starting from 7.

It was conjectured for a long time that there are ``a few'' algebras
of this kind, and only in \cite{Kha2}, it was proved that
every irreducible component of the variety of complex filiform Lie
algebras of dimension greater than 7 contains a Zariski open
set, consisting of characteristically nilpotent Lie algebras. This
implies that there are ``many'' characteristically nilpotent Lie
algebras, and hence they play an important role in the description
of the variety of nilpotent Lie algebras.

The notion of Leibniz algebra has been introduced in \cite{Lod}
as a non-antisymmetric generalization of Lie algebras. During the
last 20 years the theory of Leibniz algebras has been actively
studied and many results of the theory of Lie algebras have been
extended to Leibniz algebras (see, e.g. \cite{AAOK,AyOm2,GoOm}). In particular, an analogue of
Jacobson's theorem was proved for Leibniz algebras \cite{LaRiTu}.
Moreover, it is shown that similarly to the case of Lie algebras
for Leibniz algebras the inverse of Jacobson's statement does not
hold. In \cite{Omi1}, analogously as for Lie algebras,  the notion of characteristically nilpotent Leibniz algebra
was defined and some families of characteristically nilpotent
filiform Leibniz algebras were found. Moreover, there was presented a criterion
of characteristically nilpotency of some filiform Leibniz
algebras. Due to existence of a example of a characteristically
nilpotent Leibniz algebra which does not satisfy the condition of
\cite{Omi1}, the criterion is not correct.

It is known that the class of all filiform Leibniz algebras is
split into three non-intersected families \cite{AyOm2,GoOm},
where one of the family contains filiform Lie algebras and the
other two families come out from naturally graded non-Lie
filiform Leibniz algebras. A isomorphism criteria for these two families of filiform
Leibniz algebras have been given in \cite{GoOm}.

In this paper, as opposed to \cite{Omi1}, we find out a correct
criterion of characteristically nilpotency of filiform Leibniz
algebras. In addition, it is described, up to isomorphism, the
class of filiform Leibniz algebras complementary to
characteristically nilpotent filiform Leibniz algebras. Note that
filiform Leibniz algebras were classified only up to dimension
less than $10$ in \cite{GoJiKh,Omi2,RaSo,Rik}. Here we classify non-characteristically nilpotent
non-Lie filiform Leibniz algebras for any fixed dimension.
Recall that non-characteristically nilpotent filiform Lie algebras
are described in \cite{GoHa}.

The classification of non-characteristically nilpotent Leibniz
algebras plays an important role in structure theory of solvable
Leibniz algebras. In theory of finite dimensional Leibniz algebras
it is known the description of solvable Leibniz algebras with a
given nilradical based on properties of non-nilpotent derivations
of the nilradical. Hence, solvable Leibniz algebras can have only
non-characteristically nilpotent nilradical. Therefore, it is very
crucial to indicate non-characteristically nilpotent Leibniz
algebras. Papers \cite{NdWi,SnWi,WaLiDe} are devoted  to classifications of solvable Lie algebras with various
types of nilradical. The solvable Leibniz algebras with null-filiform
and naturally graded filiform nil-radical are classified in
\cite{CLOK1,CLOK2}.

Catalan numbers are a well-known sequence of numbers and they are
involved in a lot of branches of mathematics (combinatorics, graph
theory, probability theory and many others). In the present paper
we classify some kinds of non-characteristically nilpotent
filiform Leibniz algebras in terms of $p$-th Catalan numbers.

In order to achieve our goal, we have organized the paper as follows: in Section
\ref{S:prel} we present necessary definitions and results that will be used in the rest of the paper.
 In Section \ref{S:main} we describe characteristically nilpotent filiform
non-Lie Leibniz algebras and give the classification of
non-characteristically nilpotent filiform non-Lie Leibniz
algebras.

Through the paper all the spaces and algebras are assumed finite
dimensional.

\section{Preliminaries}\label{S:prel}

In this section we give necessary definitions and preliminary
results.

\begin{defn} An algebra $(L,[-,-])$ over a field $F$ is called a Leibniz algebra if for any $x,y,z\in L$, the so-called Leibniz identity
\[ \big[[x,y],z\big]=\big[[x,z],y\big]+\big[x,[y,z]\big] \] holds.
\end{defn}

For a Leibniz algebra $L$ consider the following central lower
series:
\[
L^1=L,\qquad L^{k+1}=[L^k,L^1] \qquad k \geq 1.
\]

\begin{defn} A Leibniz algebra $L$ is called
nilpotent if there exists  $s\in\mathbb N $ such that $L^s=0$.
\end{defn}

\begin{defn} A Leibniz algebra $L$ is said to be filiform if
$\dim L^i=n-i$, where $n=\dim L$ and $2\leq i \leq n$.
\end{defn}

The following theorem decomposes  all $(n+1)$-dimensional
filiform Leibniz algebras into  three families of algebras.

\begin{thm}[\cite{OmRa}]
Any complex $(n+1)$-dimensional filiform Leibniz algebra admits a
basis $\{e_0, e_1, \dots, e_n\}$ such that the table of multiplication
of the algebra has one of the following forms:
\[F_1(\alpha_3,\alpha_4, \dots, \alpha_n, \theta ):
\left\{\begin{array}{ll}
[e_0,e_0]=e_{2},&  \\[1mm]
[e_i,e_0]=e_{i+1}, & \  1\leq i \leq {n-1},\\[1mm]
[e_0,e_1]=\sum\limits_{k=3}^{n-1}\alpha_{k}e_k+\theta e_n, & \\[1mm]
[e_i,e_1]=\sum\limits_{k=i+2}^n\alpha_{k+1-i}e_k, & \ 1\leq i \leq
{n-2},
\end{array} \right.\]

\[F_2(\beta_3,\beta_4, \dots, \beta_n, \gamma ):
\left\{\begin{array}{ll}
[e_0,e_0]=e_{2}, &  \\[1mm]
[e_i,e_0]=e_{i+1}, & \  2\leq i \leq {n-1},\\[1mm]
[e_0,e_1]= \sum\limits_{k=3}^n\beta_{k}e_k, & \\[1mm]
[e_1,e_1]=\gamma e_n, & \\[1mm]
[e_i,e_1]=\sum\limits_{k=i+2}^n\beta_{k+1-i}e_k, & \ 2\leq i \leq
{n-2},
\end{array} \right.\]

\[F_3(\theta_1, \theta_2, \theta_3)=\left\{\begin{array}{lll} [e_i,e_0]=e_{i+1}, &
1\leq i \leq {n-1},\\[1mm]
[e_0,e_i]=-e_{i+1}, & 2\leq i \leq {n-1}, \\[1mm]
[e_0,e_0]=\theta_1e_n, &   \\[1mm]
[e_0,e_1]=-e_2+\theta_2e_n, & \\[1mm]
[e_1,e_1]=\theta_3e_n, &  \\[1mm]
[e_i,e_j]=-[e_j,e_i] \in \lin<e_{i+j+1}, e_{i+j+2}, \dots, e_n>, &
1\leq i \leq n-2,\\[1mm]
& 2 \leq j \leq {n-i},\\[1mm]
[e_i,e_{n-i}]=-[e_{n-i},e_i]=\alpha (-1)^{i}e_n, & 1\leq i\leq
n-1,
\end{array} \right.\]
where $\alpha\in\{0,1\}$ for odd $n$ and $\alpha=0$ for even $n$.
Moreover, the structure constants of an algebra from
$F_3(\theta_1, \theta_2, \theta_3)$ should satisfy the Leibniz
identity.
\end{thm}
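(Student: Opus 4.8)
The plan is to build a basis adapted to the lower central series and then pin down every structure constant through the Leibniz identity, letting the trichotomy emerge from where the ``squares'' $[e_0,e_0]$, $[e_1,e_1]$ and the mixed bracket $[e_1,e_0]$ sit in the filtration. First I would exploit filiformness directly: since $\dim L^i=(n+1)-i$ for $2\le i\le n$, each quotient $L^i/L^{i+1}$ is one-dimensional while $L/L^2$ is two-dimensional. Choosing $e_i\in L^i\setminus L^{i+1}$ for $i\ge 2$ and lifting a basis $e_0,e_1$ of $L/L^2$, one notes that the Leibniz identity $\big[[x,y],z\big]=\big[[x,z],y\big]+\big[x,[y,z]\big]$ says precisely that each right multiplication $R_z\colon y\mapsto[y,z]$ is a derivation; moreover $R_z$ sends $L^i$ into $L^{i+1}$ and is therefore nilpotent. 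This derivation property is the engine that will later propagate the leading data through all layers.

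Next I would normalize the action of a generator. Because the nilpotency class is maximal, the representative $e_0\in L\setminus L^2$ can be chosen so that $R_{e_0}$ restricts to a single maximal shift on $\lin\langle e_2,\dots,e_n\rangle$, i.e.\ $[e_i,e_0]=e_{i+1}$ after rescaling the $e_i$. The decisive dichotomy is then the location of $[e_0,e_0]$: if $[e_0,e_0]\notin L^3$ it may be normalized to $e_2$, which forces the shape of $F_1$ and $F_2$; if instead $[e_0,e_0]\in L^n$ the algebra becomes ``Lie-like'', the left and right multiplications by $e_0$ turn (anti)symmetric, and one is driven to $F_3$. Inside the first case the split between $F_1$ and $F_2$ is dictated solely by whether $e_1$ also shifts, namely $[e_1,e_0]=e_2\ne 0$ (so that $[e_i,e_0]=e_{i+1}$ persists down to $i=1$, giving $F_1$) or $[e_1,e_0]=0$ with the residual datum instead carried by $[e_1,e_1]=\gamma e_n$, giving $F_2$.

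With the leading behaviour fixed, the remaining brackets are forced. I would run the Leibniz identity systematically: starting from $[e_0,e_1]=\sum_k\alpha_k e_k$ (respectively $\beta_k$), an induction on $i$ using $[e_i,e_1]=\big[[e_{i-1},e_0],e_1\big]$ and expanding the right side via $\big[e_{i-1},[e_0,e_1]\big]$ and $\big[[e_{i-1},e_1],e_0\big]$ produces exactly the recursion $[e_i,e_1]=\sum_{k\ge i+2}\alpha_{k+1-i}e_k$, collapsing the free parameters to $\alpha_3,\dots,\alpha_n,\theta$ (respectively $\beta_3,\dots,\beta_n,\gamma$). The same bookkeeping in the Lie-like case recovers the antisymmetric relations of $F_3$ together with $[e_i,e_{n-i}]=\alpha(-1)^ie_n$; the parity condition is then immediate, since for even $n$ the middle pairing $[e_{n/2},e_{n/2}]$ must be antisymmetric, hence zero, forcing $\alpha=0$, whereas for odd $n$ the value $\alpha\in\{0,1\}$ survives after rescaling $e_n$.

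The main obstacle is the normalization step: proving that a single generator can always be chosen with $R_{e_0}$ a full-length Jordan block, and that the three placements of $[e_0,e_0]$ (and the accompanying behaviour of $[e_1,e_0]$, $[e_1,e_1]$) genuinely exhaust all filiform Leibniz algebras up to the admissible basis changes, which are rescalings together with shears within each graded layer $L^i/L^{i+1}$. This is where the case analysis is heaviest, since one must rule out spurious intermediate positions of the squares and verify that the residual freedom in the basis has been spent exactly, so that the displayed constants are genuine parameters, to be cut down further only by the isomorphism criteria of \cite{GoOm}.
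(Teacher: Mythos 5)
First, note that the paper does not prove this statement at all: it is quoted from \cite{OmRa} (see also \cite{AyOm2,GoOm}), so the only comparison available is with the proof in that literature. There the argument is organized around the naturally graded algebra $\operatorname{gr}L=\bigoplus_i L^i/L^{i+1}$: one first classifies naturally graded filiform Leibniz algebras (exactly two non-Lie ones plus the naturally graded filiform Lie algebras), the trichotomy $F_1/F_2/F_3$ being precisely the statement of which of these $\operatorname{gr}L$ is; the tables are then obtained by choosing a basis adapted to the graded structure and propagating with the Leibniz identity, much as in your third paragraph. Your propagation machinery is sound: $R_z$ is a derivation mapping $L^i$ into $L^{i+1}$, an adapted basis with $[e_i,e_0]=e_{i+1}$ exists, and the recursion $[e_i,e_1]=[[e_{i-1},e_1],e_0]+[e_{i-1},[e_0,e_1]]$ collapses everything to the displayed parameters once one also proves $[x,e_j]=0$ for $j\geq 2$ in the non-Lie families (which follows by induction from $e_2=[e_0,e_0]$ and the Leibniz identity, and which your sketch uses implicitly).

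The genuine gap is exactly the step you flag as ``the main obstacle'' and then do not carry out: the exhaustiveness of the trichotomy. As stated, your dichotomy on the position of $[e_0,e_0]$ is not even well defined, because it is not invariant under the admissible changes of generators: already in family $F_1$ the substitution $e_0\mapsto Ae_0+Be_1$ gives $[e_0',e_0']=A(A+B)e_2+\cdots$, so the choice $B=-A$ pushes the square into $L^3$ without changing the isomorphism class --- this is precisely why the isomorphism criterion of Theorem \ref{thm2} demands $A(A+B)\neq 0$. You must therefore prove that \emph{some} choice of generators realizes one of exactly three leading configurations ($[e_0,e_0]\equiv[e_1,e_0]\equiv e_2$ modulo $L^3$; or $[e_0,e_0]\equiv e_2$ with $[e_1,e_0]=0$; or the antisymmetric configuration with both squares pushed all the way into the centre $L^n$), and that no intermediate position of $[e_0,e_0]$, $[e_1,e_0]$, $[e_1,e_1]$ survives after spending the residual basis freedom. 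That case analysis is the actual content of the theorem; without carrying it out, or without invoking the prior classification of naturally graded filiform Leibniz algebras from which it follows, the argument is a plausible outline rather than a proof.
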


It is easy to see that algebras of the first and the second
families are non-Lie algebras. Note that if $(\theta_1, \theta_2,
\theta_3) = (0,0,0)$, then an algebra of the third class is a Lie
algebra and if $(\theta_1, \theta_2, \theta_3) \neq (0,0,0)$, then
it is a non-Lie Leibniz algebra.

Further we will use the following lemma.
\begin{lem}[\cite{GoOm}]\label{lem2} For any ${0\leq p\leq {n-k}, \ 3\leq k\leq n}$, the following equality holds:
\[
 \sum\limits_{i=k}^n{a(i)}\sum\limits_{j=i+p}^n{b(i,j)e_j}=
\sum\limits_{j=k+p}^n{\sum\limits_{i=k}^{j-p}{a(i)b(i,j)e_j}}.
\]
\end{lem}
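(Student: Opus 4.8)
The plan is to recognize this as a finite Fubini-type interchange of the order of summation, so that the entire content reduces to verifying that the two iterated sums range over exactly the same set of index pairs $(i,j)$ and attach to each such pair the same summand $a(i)b(i,j)e_j$. Since both sides are manifestly finite sums of terms of this common form, it suffices to prove that the underlying index sets coincide.

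First I would read off the index set on the left. The outer sum fixes $i$ in the range $k \leq i \leq n$, and for each such $i$ the inner sum runs over $i+p \leq j \leq n$. Thus the left-hand side sums over
\[
S_L = \set{(i,j) : k \leq i \leq n, \ i+p \leq j \leq n}.
\]
Reading the right-hand side in the same way, its outer sum fixes $j$ in $k+p \leq j \leq n$ and its inner sum runs over $k \leq i \leq j-p$, so it sums over
\[
S_R = \set{(i,j) : k+p \leq j \leq n, \ k \leq i \leq j-p}.
\]

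Next I would establish the set equality $S_L = S_R$ directly. If $(i,j) \in S_L$, then from $i \geq k$ together with $j \geq i+p$ one gets $j \geq k+p$, while $j \geq i+p$ rearranges to $i \leq j-p$; combined with $i \geq k$ and $j \leq n$ this places $(i,j)$ in $S_R$. Conversely, if $(i,j) \in S_R$, then $i \leq j-p$ gives $j \geq i+p$, and $i \leq j-p \leq n-p \leq n$ yields $i \leq n$; together with $i \geq k$ and $j \leq n$ this shows $(i,j) \in S_L$. Here the hypothesis $0 \leq p \leq n-k$ is exactly what guarantees that the bounds are consistent and the sums are nonvacuous: it forces $k+p \leq n$, so that the outer range of $S_R$ is nonempty, and for the smallest admissible $j=k+p$ the inner range $k \leq i \leq j-p$ collapses to the single valid value $i=k$.

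There is no genuine obstacle here beyond careful bookkeeping of the summation limits; the identity is purely combinatorial and carries no analytic content. Once $S_L = S_R$ is established, regrouping the common family of terms $\{a(i)b(i,j)e_j\}_{(i,j)\in S_L}$ first by $i$ and then by $j$, versus first by $j$ and then by $i$, yields the two sides of the claimed equality, completing the proof.
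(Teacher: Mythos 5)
Your argument is correct: the identity is indeed just a finite interchange of the order of summation, and your verification that the two index sets coincide (including the edge cases governed by $0\leq p\leq n-k$) is complete. The paper itself states this lemma without proof, citing \cite{GoOm}, so there is no internal proof to compare against; your Fubini-type bookkeeping is the standard and essentially only argument for such a statement.
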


Let us present a isomorphism criterion for the first and second
families of non-Lie filiform Leibniz algebras.

\begin{thm}[\cite{GoOm}]\label{thm2} \

(a) Two algebras  from  the families $F_1( \alpha_3, \alpha_4, \dots,
\alpha_n, \theta )$ and $F_1^{'}(\alpha'_{3}, \alpha'_{4}, \dots,
\alpha'_{n}, \theta')$ are isomorphic if and only if there exist
$A,B\in \mathbb{C}$ such that $A(A+B)\neq 0$ and the following
conditions hold:
{\small
\begin{align*}
\alpha'_3& =\frac{(A+B)}{A^2} \alpha_3, \\
\alpha'_t & =\frac{1}{A^{t-1}} \left((A+B)\alpha_t -
\sum\limits_{k=3}^{t-1}\left(
 \binom {k-1}{k-2} \,
   A^{k-2}B\alpha_{t+2-k} +\right. \right. +
\binom {k-1}{k-3} \, A^{k-3}B^2\sum\limits_{i_1=k+2}^{t}\alpha_{t+3-i_1}
\alpha_{i_1+1-k} \\
& {}  + \binom {k-1}{k-4} \,
 A^{k-4}B^3\sum\limits_{i_2=k+3}^{t}
\sum\limits_{i_1=k+3}^{i_2}\alpha_{t+3-i_2}\cdot
\alpha_{i_2+3-i_1}\cdot
\alpha_{i_1-k} + \cdots \\
& {} + \binom {k-1}{1} \,
 AB^{k-2}\sum\limits_{i_{k-3}=2k-2}^{t}
\sum\limits_{i_{k-4}=2k-2}^{i_{k-3}} \dots
\sum\limits_{i_1=2k-2}^{i_2} \alpha_{t+3-i_{k-3}}\cdot
\alpha_{i_{k-3}+3-i_{k-4}}\cdot \dots \cdot
\alpha_{i_2+3-i_1}\alpha_{i_1+5-2k}\\
& {}  + \left.\left. B^{k-1}\sum\limits_{i_{k-2}=2k-1}^{t}
\sum\limits_{i_{k-3}=2k-1}^{i_{k-2}} \dots
\sum\limits_{i_1=2k-1}^{i_2} \alpha_{t+3-i_{k-2}}\cdot
\alpha_{i_{k-2}+3-i_{k-3}}\cdot \dots \cdot \alpha_{i_2+3-i_1}\cdot
\alpha_{i_1+4-2k}\right)\cdot \alpha'_{k}\right),
\end{align*}
}
{\small
\begin{align*}
\theta'& =\frac{1}{A^{n-1}}\left(A\theta +B\alpha_n-
\sum\limits_{k=3}^{n-1}\left(
\binom {k-1}{k-2} \,
 A^{k-2}B\alpha_{n+2-k}+\right.\right.
\binom {k-1}{k-3} \, A^{k-3}B^2\sum\limits_{i_1=k+2}^{n}\alpha_{n+3-i_1}
\alpha_{i_1+1-k} \\
& {}  + \binom {k-1}{k-4} \,  A^{k-4}B^3\sum\limits_{i_2=k+3}^{n}
\sum\limits_{i_1=k+3}^{i_2}\alpha_{n+3-i_2} \alpha_{i_2+3-i_1}
\alpha_{i_1-k} + \cdots  \\
& {}  + \binom {k-1}{1} \,  AB^{k-2}\sum\limits_{i_{k-3}=2k-2}^{n}
\sum\limits_{i_{k-4}=2k-2}^{i_{k-3}} \dots
\sum\limits_{i_1=2k-2}^{i_2} \alpha_{n+3-i_{k-3}}
\alpha_{i_{k-3}+3-i_{k-4}} \dots \alpha_{i_2+3-i_1}
\alpha_{i_1+5-2k}  \\
& {}  + \left.\left. B^{k-1}\sum\limits_{i_{k-2}=2k-1}^{n}
\sum\limits_{i_{k-3}=2k-1}^{i_{k-2}} \dots
\sum\limits_{i_1=2k-1}^{i_2} \alpha_{n+3-i_{k-2}}\cdot
\alpha_{i_{k-2}+3-i_{k-3}}\cdot \dots \cdot \alpha_{i_2+3-i_1}\cdot
\alpha_{i_1+4-2k}\right)\cdot \alpha'_{k}\right),
\end{align*}
}
where $4\leq t\leq n$.

(b) Two algebras from the families $F_2(\beta_3, \beta_4, \dots,
\beta_n, \gamma )$ and $F_2^{'}(\beta'_{3}, \beta'_{4}, \dots,
\beta'_{n}, \gamma')$ are isomorphic if and only if there exist
$A,B, D \in \mathbb{C}$ such that $AD\neq 0$  and the following
conditions hold:
{\small
\begin{align*}
\gamma'&=\frac{D^2}{A^{n}} \gamma, \\
\beta'_3 &=\frac{D}{A^2} \beta_3, \\
\beta'_t & =\frac{1}{A^{t-1}}\left(D\beta_t - \sum\limits_{k=3}^{t-1}\left(
\binom {k-1}{k-2} \, A^{k-2}B\beta_{t+2-k}+ \binom {k-1}{k-3} \,
A^{k-3}B^2\sum\limits_{i_1=k+2}^{t}\beta_{t+3-i_1}
\cdot \beta_{i_1+1-k}  \right. \right.\\
& {} + \binom {k-1}{k-4} \, A^{k-4}B^3\sum\limits_{i_2=k+3}^{t}
\sum\limits_{i_1=k+3}^{i_2}\beta_{t+3-i_2}\cdot
\beta_{i_2+3-i_1}\cdot
\beta_{i_1-k} + \ \cdots \  \\
& {} + \binom {k-1}{1} \, AB^{k-2}\sum\limits_{i_{k-3}=2k-2}^{t}
\sum\limits_{i_{k-4}=2k-2}^{i_{k-3}} \dots
\sum\limits_{i_1=2k-2}^{i_2} \beta_{t+3-i_{k-3}}
\beta_{i_{k-3}+3-i_{k-4}} \dots \beta_{i_2+3-i_1}\beta_{i_1+5-2k}\\
& {} +  \left. \left. B^{k-1}\sum\limits_{i_{k-2}=2k-1}^{t}
\sum\limits_{i_{k-3}=2k-1}^{i_{k-2}} \dots
\sum\limits_{i_1=2k-1}^{i_2} \beta_{t+3-i_{k-2}}
\beta_{i_{k-2}+3-i_{k-3}} \dots
\beta_{i_2+3-i_1}\beta_{i_1+4-2k}\right)\beta'_k\right),
\end{align*}
}
where $4\leq t\leq n-1$,
{\small
\begin{align*}
 \beta'_n & = \frac{BD\gamma}{A^{n}}
+\frac{1}{A^{n-1}}\left(D\beta_n - \sum\limits_{k=3}^{n-1}\left(
\binom {k-1}{k-2} \,
A^{k-2}B\beta_{n+2-k} + \binom {k-1}{k-3} \,
A^{k-3}B^2\sum\limits_{i_1=k+2}^{n}\beta_{n+3-i_1}
\cdot \beta_{i_1+1-k}  \right. \right.\\
& {} + \binom {k-1}{k-4} \, A^{k-4}B^3\sum\limits_{i_2=k+3}^{n}
\sum\limits_{i_1=k+3}^{i_2}\beta_{n+3-i_2}\cdot
\beta_{i_2+3-i_1}\cdot
\beta_{i_1-k} + \ \cdots  \  \\
& {} + \binom {k-1}{1} \, AB^{k-2}\sum\limits_{i_{k-3}=2k-2}^{n}
\sum\limits_{i_{k-4}=2k-2}^{i_{k-3}} \dots
\sum\limits_{i_1=2k-2}^{i_2} \beta_{n+3-i_{k-3}}
\beta_{i_{k-3}+3-i_{k-4}} \dots \beta_{i_2+3-i_1}\beta_{i_1+5-2k}\\
& {} + \left. \left. B^{k-1}\sum\limits_{i_{k-2}=2k-1}^{n}
\sum\limits_{i_{k-3}=2k-1}^{i_{k-2}} \dots
\sum\limits_{i_1=2k-1}^{i_2} \beta_{n+3-i_{k-2}}
\beta_{i_{k-2}+3-i_{k-3}} \dots
\beta_{i_2+3-i_1}\beta_{i_1+4-2k}\right)\beta'_k\right),
\end{align*} }
where $\binom{m}{n}$ are the binomial
coefficients.
\end{thm}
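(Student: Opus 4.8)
The statement is an equivalence between the existence of an isomorphism and the displayed system of relations, so the plan is to prove the two implications separately, with essentially all of the work in the forward (``only if'') direction; the converse is a direct, if lengthy, verification. The starting observation is that an algebra of type $F_1$ or $F_2$ is two-generated: from $[e_0,e_0]=e_2$ and $e_{k+1}=[e_k,e_0]$ one recovers $e_2,\dots,e_n$, so $e_0,e_1$ generate $L$ and any isomorphism is completely determined by the images of these two elements. Since an isomorphism must preserve the lower central series, whose terms are $L^k=\langle e_k,\dots,e_n\rangle$, a new adapted basis can be written as
\[
e_0'=Ae_0+Be_1+(\text{terms in }L^2),\qquad e_1'=Ce_0+De_1+(\text{terms in }L^2).
\]
I would then impose that the new basis again realise the canonical multiplication. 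Computing $e_2'=[e_0',e_0']$ gives $A(A+B)e_2+\cdots$ for $F_1$ and $A^2e_2+\cdots$ for $F_2$. In the $F_1$ case the two requirements $[e_0',e_0']=[e_1',e_0']=e_2'$ yield $C+D=A+B$, while the absence of an $e_2'$-component in $[e_1',e_1']$ gives $C(C+D)=0$, hence $C=0$ and $D=A+B$; in the $F_2$ case the relation $[e_1,e_0]=0$ forces $[e_1',e_0']=0$, so $C=0$ while $D$ stays free. This produces the nondegeneracy conditions $A(A+B)\neq0$ and $AD\neq0$ and the base relations $\alpha_3'=\frac{A+B}{A^2}\alpha_3$, $\beta_3'=\frac{D}{A^2}\beta_3$ and $\gamma'=\frac{D^2}{A^n}\gamma$.

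Next I would determine the whole new basis recursively from $e_{k+1}'=[e_k',e_0']$, proving by induction that $e_k'=A^{k-1}(A+B)e_k+(\text{terms of higher index})$ for $F_1$ and $e_k'=A^ke_k+\cdots$ for $F_2$, and then evaluate the defining products $[e_0',e_1']$ and $[e_i',e_1']$ to read off the remaining structure constants. Each expansion, carried out with the original multiplication table, turns into the nested double sums handled by Lemma~\ref{lem2}; that lemma is precisely the tool needed to interchange summation order and collect the coefficient of each $e_j'$, converting the raw expansions into closed expressions for $\alpha_t'$, $\theta'$, $\beta_t'$ and $\beta_n'$.

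The heart of the argument, and the main obstacle, is the bookkeeping that produces the binomial weights and the exact summation ranges. They appear because $e_0'=Ae_0+Be_1+\cdots$ carries both an $e_0$- and an $e_1$-component: in building $e_k'$ and in expanding $[e_i',e_1']$, each bracketing against $e_0'$ may follow either the ``$e_0$-branch'' (a factor $A$ and an index shift by one) or the ``$e_1$-branch'' (a factor $B$ together with one factor $\alpha_\ast$ coming from $[e_j,e_1]=\sum\alpha_\ast e_\ast$). Selecting $j$ of the $k-1$ available brackets to be $e_1$-branches contributes $\binom{k-1}{j}A^{k-1-j}B^{j}$ times a $j$-fold nested sum of products of the $\alpha$'s, which is exactly the $j$-th line of the displayed formula. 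I would make this precise by an induction on $t$ together with the number of $e_1$-branches, the inductive step merging one new bracketing into the existing nested sum via Lemma~\ref{lem2}; matching the lower and upper limits at each level is the delicate point. Two features require separate care: the top coefficients $\theta'$ and $\beta_n'$ receive an extra contribution from the $Be_1$-part of $e_0'$ meeting the $e_1$-part of $e_1'$ through $[e_1,e_1]$, which is the origin of the $B\alpha_n$ summand in $\theta'$ and of the $\frac{BD\gamma}{A^n}$ summand in $\beta_n'$; and the contrast $[e_1,e_0]=e_2$ in $F_1$ versus $[e_1,e_0]=0$ in $F_2$ is what replaces $A+B$ by $A$ in the leading coefficients and frees the independent scaling $D$ of $e_1'$.

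Finally, for the converse I would take the displayed formulas as the definition of a linear map between the two canonical algebras and verify the Leibniz identity on all basis triples. Granting the combinatorial identities established above (once more through Lemma~\ref{lem2}), this reduces to a routine check, so the genuine content of the theorem lies in the forward direction and in the inductive derivation of the binomial formulas.
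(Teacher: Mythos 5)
This theorem is imported verbatim from \cite{GoOm}; the paper under review states it in the preliminaries without proof, so there is no internal argument to compare yours against. Your strategy is the standard one from that reference and is the derivation that the presence of Lemma~\ref{lem2} in the preliminaries is signalling: two-generatedness by $e_0,e_1$, an adapted change of basis $e_0'=Ae_0+Be_1+\cdots$, $e_1'=Ce_0+De_1+\cdots$ compatible with the lower central series, the deduction $C=0$ and $D=A+B$ (resp.\ $D$ free) from $[e_1,e_0]=e_2$ in $F_1$ versus $[e_1,e_0]=0$ in $F_2$, and the recursive expansion of $e_{k+1}'=[e_k',e_0']$ and of $[e_0',e_1']$, $[e_i',e_1']$ reorganized via Lemma~\ref{lem2}; your low-order checks ($\alpha_3'$, $\beta_3'$, $\gamma'$, $e_k'=A^{k-1}(A+B)e_k+\cdots$ resp.\ $A^ke_k+\cdots$, and the nondegeneracy conditions) are all correct. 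The one substantive piece you describe but do not execute is the inductive bookkeeping producing the binomial weights, the exact summation limits, and the recursive occurrence of $\alpha_k'$ (resp.\ $\beta_k'$) inside the formulas; that is where essentially all the labour of \cite{GoOm} lies, but as an outline your proposal is sound.
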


The derivation of Leibniz algebras is defined as usual.

\begin{defn}
A linear transformation $d$ of a Leibniz algebra $L$ is called a
derivation if for any $x, y\in L$
\[d([x,y])=[d(x),y]+[x, d(y)].\]
\end{defn}

A nilpotent Leibniz algebra is called \emph{characteristically
nilpotent} if all its  derivations are nilpotent. As it was
mentioned in Section \ref{S:intro}, the class of characteristically
nilpotent Leibniz algebras is a subclass of the nilpotent Leibniz
algebras.

In  \cite{Omi1} the following isomorphism
criterion is obtained.

\begin{thm}[\cite{Omi1}] \label{tnil} A Leibniz algebra of the family $F_1(\alpha_3,\alpha_4, \dots, \alpha_n, \theta
)$ is characteristically nilpotent if there exist $i, j  \ (3
\leq i \neq j \leq n)$ such that $\alpha_i \alpha_j \neq 0$.
\end{thm}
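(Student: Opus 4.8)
The plan is to reduce the assertion to a statement about the action of a derivation on the two generators $e_0,e_1$. Since $L=F_1(\alpha_3,\dots,\alpha_n,\theta)$ is nilpotent and is generated as an algebra by $e_0$ and $e_1$, every derivation $d$ preserves the lower central series $L\supset L^2\supset\cdots$, and each quotient $L^k/L^{k+1}$ is one-dimensional. The eigenvalues of $d$ on $L^k/L^{k+1}$ are sums of $k$ of the two eigenvalues of the induced map $\bar d$ on $L/L^2=\langle\bar e_0,\bar e_1\rangle$; consequently $d$ is nilpotent if and only if $\bar d$ is nilpotent. Writing $d(e_0)\equiv a_0e_0+a_1e_1$ and $d(e_1)\equiv b_0e_0+b_1e_1\pmod{L^2}$, I must therefore show that $\bar d=\left(\begin{smallmatrix}a_0&b_0\\ a_1&b_1\end{smallmatrix}\right)$ has both eigenvalues zero for \emph{every} derivation.

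Concretely I would set $d(e_0)=\sum_i a_ie_i$, $d(e_1)=\sum_i b_ie_i$ and exploit the defining products $e_2=[e_0,e_0]$, $e_{k+1}=[e_k,e_0]$ together with the derivation rule to express all $d(e_k)$ recursively and to read off the diagonal data. Comparing the two expressions for $d(e_2)$ coming from $e_2=[e_0,e_0]$ and $e_2=[e_1,e_0]$ yields $a_0+a_1=b_0+b_1$; applying $d$ to $[e_0,e_1]=\sum_k\alpha_ke_k+\theta e_n$ and reading the coefficient of $e_2$, which must vanish because $[e_0,e_1]\in L^3$ is preserved by $d$, forces $b_0=0$. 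Letting $i$ be the smallest index with $\alpha_i\neq0$ and matching the lowest-order coefficient (that of $e_i$) in $d([e_0,e_1])$ and in $d([e_1,e_1])$, I expect to obtain $a_1=(i-2)a_0$ and $b_1=(i-1)a_0$, so that $\bar d$ is lower triangular with eigenvalues $a_0$ and $(i-1)a_0$. The whole problem then collapses to proving $a_0=0$.

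The decisive idea is to feed in the \emph{second} nonzero constant $\alpha_j$, $j\neq i$. In the cleanest imaginable situation, matching the coefficient of $e_j$ in $d([e_0,e_1])$ would give a second relation $a_1=(j-2)a_0$; together with $a_1=(i-2)a_0$ this forces $(i-j)a_0=0$, whence $a_0=0$ since $i\neq j$, and $\bar d$ is nilpotent. This is the structural reason one hopes two nonvanishing $\alpha$'s suffice where one does not: a single $\alpha_i$ leaves $a_0$ free and visibly produces a semisimple, non-nilpotent derivation.

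I expect the main obstacle to be precisely the extraction of this clean second relation. Unlike the leading coefficient, the coefficient of $e_j$ in $d([e_0,e_1])$ is contaminated by the higher-order tails of $d(e_k)$ for $i\le k<j$ and by the term $[u,e_1]$, where $u\in L^2$ is the degree-$\ge2$ part of $d(e_0)$; these carry the off-diagonal data of $d$ and quadratic expressions in the $\alpha_k,\theta$. Thus the relation one genuinely obtains has the shape $\alpha_j(j-i)\,a_0=\bigl(\text{polynomial in }\alpha_k,\theta\bigr)a_0$ rather than the hoped-for $a_0=0$, and controlling the tails demands a delicate induction up the levels $L^3,\dots,L^n$. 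In view of the remark in the introduction that this criterion is \emph{not} correct, I would regard the step ``two nonzero $\alpha$'s force $a_0=0$'' as the point most in need of scrutiny, and I would first test it in the smallest case $n=4$, where the $e_4$-level identities coming from $[e_0,e_1]$ and $[e_1,e_1]$ reduce to $a_0(\alpha_4+2\alpha_3^2)=0$ and $a_0(\alpha_4-2\alpha_3^2-2\theta)=0$. These do not force $a_0=0$ on the locus $\alpha_4=-2\alpha_3^2$, $\theta=-2\alpha_3^2$ (with $\alpha_3\neq0$), so I would be prepared to find that the contamination exactly cancels $(j-i)a_0$ there, leaving a non-nilpotent derivation even though $\alpha_3\alpha_4\neq0$; this is exactly the kind of special locus on which the stated sufficiency should be expected to fail.
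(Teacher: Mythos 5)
Your refusal to push the step ``two nonzero $\alpha$'s force $a_0=0$'' through to a conclusion is exactly right, and in fact there is nothing to compare your attempt against: the paper does not prove Theorem \ref{tnil}. The statement is quoted from \cite{Omi1} only in order to be refuted --- immediately after stating it the authors exhibit the $6$-dimensional algebra with $\alpha_3=1$, $\alpha_4=-2$, $\alpha_5=\theta=5$, which satisfies $\alpha_i\alpha_j\neq 0$ for $i\neq j$ yet carries the displayed non-nilpotent derivation (diagonal entries $a_1,2a_1,\dots,6a_1$). Your preparatory analysis matches the paper's Proposition \ref{pr1}: the derivation matrix is upper triangular with diagonal $a_0,\,a_0+a_1,\,2a_0+a_1,\dots,na_0+a_1$, so nilpotency of $d$ reduces to $a_0=a_1=0$, and the obstructions are exactly the relations \eqref{E:der}. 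Your $n=4$ test case is also correct: $\alpha_3\neq 0$ forces $a_1=a_0$, after which \eqref{E:der} and the $e_n$-relation give $a_0(\alpha_4+2\alpha_3^2)=0$ and $a_0(\alpha_4-2\alpha_3^2-2\theta)=0$, so on the locus $\theta=\alpha_4=-2\alpha_3^2$ a derivation with $a_0\neq 0$ survives. This is precisely the $s=3$ instance of the paper's Lemma \ref{l4} (note $-2\alpha_3^2=-C^{2}_{2}\alpha_3^2$ and the necessary condition $\theta=\alpha_n$), and the paper's counterexample is the $n=5$ member of the same Catalan-number family ($\alpha_5=C^{2}_{3}=5$). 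The correct sufficient condition, which the paper does prove, replaces the hypothesis of Theorem \ref{tnil} by ``$\theta\neq\alpha_n$ and some $\alpha_k\neq 0$''; so your diagnosis of where and why the quoted criterion fails is in complete agreement with the paper.
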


Further we shall need the  notion of Catalan numbers. The Catalan
numbers are defined as follows:
\[C_n= \frac {1} {n+1} \ \binom{2n}{n} = \frac{(2n)!} {(n+1)!n!}.\]

The generalized Catalan numbers or $p$-th Catalan numbers were
defined in \cite{HiPe} by the formula  \[C^{p}_{n} = \frac {1}
{(p-1)n+1} \ \binom{pn}{n}\,.\]

Obviously, 2-th Catalan numbers are usual Catalan numbers.

H. W. Gould  developed a generalization of the $n$-th Catalan numbers, also called
Rothe numbers or Rothe/Hagen coefficients of the first type (see \cite{Gou}),
 as follows:
\[A_n(x, z) = \frac x {x+zn} \ \binom{x+zn}{n} ,\]
together with their convolution formula
\begin{equation} \label{E:conv}
\sum\limits_{k=0}^n A_n(x, z) A_{n-k}(y, z) = A_n(x+y, z).
\end{equation}

Note that $A_n(1, p)$ is the $p$-th Catalan number $C^{p}_{n}$.

From the convolution formula \eqref{E:conv}, it is not difficult to obtain
the following formula:
\begin{equation} \label{E:comb}
\sum\limits_{k=1}^{n} C_k^p C_{n-k}^p = \frac {2n}
{(p-1)n+p+1}C_{n+1}^p\,.
\end{equation}

\section{The main results}\label{S:main}

Since filiform characteristically nilpotent Lie algebras are
already in detail studied in \cite{Kha1,Kha2}, we shall
consider only non-Lie Leibniz algebras.

In this section we describe characteristically nilpotent filiform
non-Lie Leibniz algebras and give the classification of
non-characteristically nilpotent filiform non-Lie Leibniz
algebras.

\subsection{Characteristically nilpotent filiform Leibniz algebras of the family $F_1( \alpha_3, \alpha_4, \dots,
\alpha_n, \theta )$}

\

Let $L$ be a filiform Leibniz algebra from the family
$F_1(\alpha_3, \alpha_4, \dots, \alpha_n, \theta )$. The following
proposition describes the derivations of such algebras.

\begin{prop} \label{pr1} The derivations of the filiform Leibniz algebras from the family $F_1( \alpha_3, \alpha_4, \dots,
\alpha_n, \theta )$ have the following form:
\[\begin{pmatrix}
 a_0&a_1&a_2&a_3&\dots&a_{n-2}&a_{n-1}&a_n\\
0&a_0+a_1&a_2&a_3&\dots&a_{n-2}&b_{n-1}&b_n\\
 0& 0& 2a_0+a_1&a_2+a_1\alpha_3&\dots&a_{n-3}+a_1\alpha_{n-2}&
a_{n-2}+a_1\alpha_{n-1}&a_{n-1}+a_1\alpha_n\\
0& 0& 0& 3a_0+a_1& \dots&a_{n-4}+2a_1\alpha_{n-2}&
a_{n-3}+2a_1\alpha_{n-1}&a_{n-2}+2a_1\alpha_{n-1}\\
 \vdots& \vdots& \vdots& \vdots&\vdots& \vdots&\vdots &\vdots \\
0& 0& 0& 0&\dots& (n-2)a_0+a_1&a_2+(n-3)a_1\alpha_3 & a_3+ (n-3)a_1\alpha_4\\
 0& 0& 0& 0&\dots& 0&(n-1)a_0+a_1 & a_2+(n-2)a_1\alpha_3\\
 0& 0& 0& 0&\dots& 0& 0& na_0+a_1
\end{pmatrix}\]
where
\[a_0(\theta - \alpha_n) = 0, \quad a_1(\alpha_n - \theta) =
a_{n-1} - b_{n-1}, \quad
 \alpha_3(a_1-a_0)=0,\]
\begin{equation}\label{E:der}
\alpha_k(a_1-(k-2)a_0)=\frac k 2 a_1\sum\limits_{j=4}^{k}\alpha_{j-1}\alpha_{k-j+3}, \quad 4 \leq k \leq n.
\end{equation}
\end{prop}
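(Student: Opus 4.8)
The plan is to exploit that the algebra $L$ of the family $F_1(\alpha_3,\dots,\alpha_n,\theta)$ is generated, as a Leibniz algebra, by $e_0$ and $e_1$: indeed $e_2=[e_0,e_0]$ and $e_{k+1}=[e_k,e_0]$ for $1\le k\le n-1$, so every $e_k$ with $k\ge 2$ is an iterated right product of $e_0$'s. Consequently a derivation $d$ is completely determined by the two vectors $d(e_0)$ and $d(e_1)$, and the task splits into (i) expressing $d(e_k)$ for $k\ge 2$ through $d(e_0)$, (ii) pinning down $d(e_1)$, and (iii) extracting the compatibility conditions on $d(e_0),d(e_1)$. A preliminary observation producing the upper–triangular shape is that every derivation preserves the lower central series, $d(L^k)\subseteq L^k$; since $L^k=\langle e_k,\dots,e_n\rangle$ for $k\ge 2$, this gives $d(e_k)\in\langle e_k,\dots,e_n\rangle$, i.e. every row of index $\ge 2$ starts on the diagonal.

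Writing $d(e_0)=\sum_{i=0}^{n}a_ie_i$, I would first compute $d(e_2)=[d(e_0),e_0]+[e_0,d(e_0)]$ directly from the table; only $[e_i,e_0]=e_{i+1}$, $[e_0,e_0]=e_2$ and $[e_0,e_1]=\sum_{k=3}^{n-1}\alpha_ke_k+\theta e_n$ contribute, giving the third row with diagonal entry $2a_0+a_1$. Then I would iterate $d(e_{k})=[d(e_{k-1}),e_0]+[e_{k-1},d(e_0)]$ for $k\ge 3$: the term $[d(e_{k-1}),e_0]$ shifts the previous row one step to the right, while $[e_{k-1},d(e_0)]=a_0e_{k}+a_1[e_{k-1},e_1]$ injects the diagonal contribution $a_0$ and the $\alpha$–dependent part $a_1[e_{k-1},e_1]$. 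An immediate induction yields the diagonal $(k,k)$–entry $ka_0+a_1$ and, more laboriously, the explicit entries of rows $2,\dots,n$.

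Next I would determine $d(e_1)=\sum_{i=0}^{n}c_ie_i$. Applying $d$ to $[e_1,e_0]=e_2$ and comparing with the $d(e_2)$ already computed gives $c_0+c_1=a_0+a_1$, $c_k=a_k$ for $2\le k\le n-2$, and $c_{n-1}=a_{n-1}+a_1(\theta-\alpha_n)$. Applying $d$ to $[e_1,e_1]=\sum_{k=3}^n\alpha_ke_k$ and reading the coefficient of $e_2$ forces $c_0=0$: the only $e_2$–contribution on the right is the $c_0e_2$ coming from $[e_1,d(e_1)]$, while the left side $\sum_{k\ge3}\alpha_kd(e_k)$ lies in $\langle e_3,\dots,e_n\rangle$. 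Hence $c_1=a_0+a_1$; renaming $c_{n-1}=b_{n-1}$ and $c_n=b_n$ gives the second row and the relation $a_1(\alpha_n-\theta)=a_{n-1}-b_{n-1}$.

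Finally, the scalar condition $a_0(\theta-\alpha_n)=0$ follows cleanly by applying $d$ to $[e_0-e_1,e_1]=(\theta-\alpha_n)e_n$ (the difference of the two defining $e_1$–products) and comparing coefficients of $e_n$: the left side is $(\theta-\alpha_n)(na_0+a_1)e_n$, the right side $(\theta-\alpha_n)(2a_0+a_1)e_n$, forcing $(n-2)a_0(\theta-\alpha_n)=0$. The relations $\alpha_3(a_1-a_0)=0$ and \eqref{E:der} come from applying $d$ to $[e_1,e_1]=\sum_{k=3}^n\alpha_ke_k$ and equating the coefficient of $e_k$ for $3\le k\le n$: the left side yields $\alpha_k(ka_0+a_1)$ from the diagonal entry of $d(e_k)$, plus mixed terms $a_j\alpha$ and quadratic terms $a_1\alpha\alpha$ from the off–diagonal entries of $d(e_m)$ with $m<k$; the right side $[d(e_1),e_1]+[e_1,d(e_1)]$ yields $2(a_0+a_1)\alpha_k$ together with further $a_j\alpha$–terms produced by the middle components $c_j=a_j$ through the brackets $[e_j,e_1]$. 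After the mixed $a_j\alpha$–terms cancel, one is left exactly with $\alpha_k(a_1-(k-2)a_0)=\tfrac{k}{2}a_1\sum_{j=4}^{k}\alpha_{j-1}\alpha_{k-j+3}$. I expect this last coefficient comparison to be the main obstacle: it needs the closed inductive form of the entries of $d(e_k)$ from the second step, together with Lemma \ref{lem2} to collapse the products of sums appearing on each side into the single sum $\sum_{j=4}^{k}\alpha_{j-1}\alpha_{k-j+3}$ and to recover the numerical factor $k/2$. To conclude, I would remark that the computation is reversible, so conversely every matrix of the displayed shape whose entries satisfy the listed conditions does define a derivation.
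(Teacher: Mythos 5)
Your proposal is correct and follows essentially the same route as the paper: expand $d(e_0)$ and $d(e_1)$ in the basis, compute $d(e_2)$ and induct to get $d(e_k)$, then compare coefficients in $d([e_1,e_0])$, $d([e_0,e_1])$ and $d([e_1,e_1])$ (using Lemma~\ref{lem2} to collapse the double sums) to extract exactly the stated constraints. Your only deviations are cosmetic: you read $b_0=0$ off the $e_2$-coefficient of $d([e_1,e_1])$ rather than of $d([e_0,e_1])$, and you obtain $a_0(\theta-\alpha_n)=0$ by differencing the two products $[e_0,e_1]$ and $[e_1,e_1]$ before applying $d$, whereas the paper subtracts the two resulting scalar relations afterwards.
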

\begin{proof} Let $L$ be a filiform Leibniz algebra from the family $F_1( \alpha_3, \alpha_4, \dots,
\alpha_n, \theta )$ and let $d: L \rightarrow L$ be a derivation
of $L$.

Put
\[d(e_0) = \sum\limits_{k=0}^na_ke_k, \quad d(e_1) = \sum\limits_{k=0}^nb_ke_k.\]

By the property of the derivation, we have
\begin{align*}
d(e_2) & = d([e_0, e_0]) = [d(e_0), e_0] +  [e_0, d(e_0)] =
\big[\sum\limits_{k=0}^na_ke_k, e_0\big]+\big[e_0, \sum\limits_{k=0}^na_ke_k\big]\\
& {} =(a_0+a_1)e_2 + \sum\limits_{k=3}^na_{k-1}e_k+ a_0e_2 + a_1\Big(\sum\limits_{k=3}^{n-1}\alpha_ke_k+ \theta e_n\Big)\\
&{}=(2a_0+a_1)e_2 + \sum\limits_{k=3}^{n-1}(a_{k-1} + a_1\alpha_{k})e_k + (a_{n-1}+ a_1\theta) e_n.
\end{align*}

By induction we derive
\[d(e_i)=(ia_0+a_1)e_i + \sum\limits_{k=i+1}^n(a_{k-i+1}+(i-1)a_1\alpha_{k-i+2})e_k, \quad 3 \leq i \leq n.\]

Indeed, if the induction hypothesis is  true for $i$, then for $i+1$
it implies, from the following chain of equalities:
\begin{align*}
d(e_{i+1})& = d([e_i, e_0]) = [d(e_i), e_0] +  [e_i, d(e_0)] \\
& {} =\big[(ia_0+a_1)e_i + \sum\limits_{k=i+1}^n(a_{k-i+1}+(i-1)a_1\alpha_{k-i+2})e_k, e_0\big]+\big[e_i, \sum\limits_{k=0}^na_ke_k\big] \\
&{} =(ia_0+a_1)e_{i+1} + \sum\limits_{k=i+2}^n(a_{k-i}+(i-1)a_1\alpha_{k-i+1})e_k +
a_0e_{i+1} + a_1\sum\limits_{k=i+2}^n\alpha_{k-i+1}e_k \\
& {} =\big((i+1)a_0+a_1\big)e_{i+1} + \sum\limits_{k=i+2}^n(a_{k-i}+ia_1\alpha_{k-i+1})e_k.
\end{align*}

Consider the property of derivation:
\begin{align*}
d([e_1, e_0]) &= [d(e_1), e_0] +  [e_1, d(e_0)] = \big[\sum\limits_{k=0}^nb_ke_k, e_0\big]+\big[e_1, \sum\limits_{k=0}^na_ke_k\big]  \\
& {} =(b_0+b_1)e_2 + \sum\limits_{k=3}^nb_{k-1}e_k+ a_0e_2 + a_1\sum\limits_{k=3}^{n}\alpha_ke_k
=(a_0+b_0+b_1)e_2 + \sum\limits_{k=3}^{n}(b_{k-1} + a_1\alpha_{k})e_k.
\end{align*}
On the other hand
\[d([e_1, e_0]) = d(e_2) = (2a_0+a_1)e_2 + \sum\limits_{k=3}^{n-1}(a_{k-1} + a_1\alpha_{k})e_k + (a_{n-1}+ a_1\theta) e_n.\]

Comparing the coefficients at the basic elements we have
\[b_0 + b_1=a_0+a_1, \quad b_i = a_i,\quad  2 \leq i \leq n-2, \quad a_1(\alpha_n - \theta) = a_{n-1} - b_{n-1}.\]
Using Lemma \ref{lem2} we obtain
\begin{equation*}
\begin{split}
& d([e_0, e_1]) = [d(e_0), e_1] +  [e_0, d(e_1)] = \big[\sum\limits_{k=0}^na_ke_k, e_1\big]+\big[e_0, \sum\limits_{k=0}^nb_ke_k\big]  \\
& {} = a_0\Big(\sum\limits_{k=3}^{n-1}\alpha_ke_k + \theta e_n\Big) +
\sum\limits_{k=1}^{n-2}a_k\sum\limits_{j=k+2}^n\alpha_{j-k+1}e_j+b_0e_2+b_1\Big(\sum\limits_{k=3}^{n-1}\alpha_ke_k
+ \theta e_n\Big)\\
& {}= b_0e_2+(a_0+b_1)\alpha_3e_3+\sum\limits_{k=4}^{n-1}(a_0+b_1)\alpha_ke_k + (a_0+b_1)\theta e_n +
a_1\alpha_3e_3+a_1\sum\limits_{j=4}^{n}\alpha_je_j+\sum\limits_{k=4}^{n}a_{k-2}\sum\limits_{j=k}^n\alpha_{j-k+3}e_j\\
& {} = b_0e_2+(a_0+a_1+b_1)\alpha_3e_3+\sum\limits_{k=4}^{n-1}(a_0+a_1+b_1)\alpha_ke_k + \big((a_0+b_1)\theta +a_1\alpha_n\big)e_n +
\sum\limits_{k=4}^{n}\Big(\sum\limits_{j=4}^k
a_{j-2}\alpha_{k-j+3}\Big)e_k\\
& {} = b_0e_2+(a_0+a_1+b_1)\alpha_3e_3+\sum\limits_{k=4}^{n-1}\Big((a_0+a_1+b_1)\alpha_k + \sum\limits_{j=4}^k
a_{j-2}\alpha_{k-j+3}\Big)e_k\\
& {}   \qquad \quad  + \Big((a_0+b_1)\theta +a_1\alpha_n + \sum\limits_{j=4}^n
a_{j-2}\alpha_{n-j+3}\Big)e_n.
\end{split}
\end{equation*}

On the other hand
\begin{equation*}
\begin{split}
& d([e_0, e_1]) = d\Big(\sum\limits_{k=3}^{n-1}\alpha_ke_k + \theta e_n\Big) = \sum\limits_{k=3}^{n-1}\alpha_k d(e_k) + \theta d(e_n)\\
& {} = \sum\limits_{k=3}^{n-1} \alpha_k\Big( (ka_0+a_1)e_k + \sum\limits_{j=k+1}^n(a_{j-k+1}+(k-1)a_1\alpha_{j-k+2})e_j\Big) + (na_0+a_1)\theta e_n\\
& {} = (3a_0+a_1)\alpha_3e_3+\sum\limits_{k=4}^{n-1} \alpha_k(ka_0+a_1) e_k
 + (na_0+a_1)\theta e_n +\sum\limits_{k=4}^n\Big(\alpha_{k-1}
\sum\limits_{j=k}^n(a_{j-k+2}+(k-2)a_1\alpha_{j-k+3})e_j\Big)\\
& {} = (3a_0+a_1)\alpha_3e_3+\sum\limits_{k=4}^{n-1} \Big( (ka_0+a_1)\alpha_k +
\sum\limits_{j=4}^k a_{j-2}\alpha_{k-j+3}+ \frac k 2
a_1\sum\limits_{j=4}^k \alpha_{j-1}\alpha_{k-j+3}\Big)e_k\\
 & {} \qquad \qquad \qquad \qquad + \Big((na_0+a_1)\theta +
\sum\limits_{j=4}^n a_{j-2}\alpha_{n-j+3}+ \frac n 2
a_1\sum\limits_{j=4}^n \alpha_{j-1}\alpha_{n-j+3} \Big) e_n.
\end{split}
\end{equation*}

Comparing the coefficients at the basic elements we conclude
\[b_0=0 \Rightarrow b_1=a_0+a_1, \quad (a_0+a_1+b_1)\alpha_3 = (3a_0+a_1)\alpha_3,\]
\[(a_0+a_1+b_1)\alpha_k + \sum\limits_{j=4}^k
a_{j-2}\alpha_{k-j+3} = (ka_0+a_1)\alpha_k + \sum\limits_{j=4}^k
a_{j-2}\alpha_{k-j+3}+ \frac k 2 a_1\sum\limits_{j=4}^k
\alpha_{j-1}\alpha_{k-j+3}, \quad 4 \leq k \leq n-1,\]
\[(a_0+b_1)\theta +a_1\alpha_n + \sum\limits_{j=4}^n
a_{j-2}\alpha_{n-j+3} = (na_0+a_1)\theta + \sum\limits_{j=4}^n
a_{j-2}\alpha_{n-j+3}+ \frac n 2 a_1\sum\limits_{j=4}^n
\alpha_{j-1}\alpha_{n-j+3}.\]

Replacing $b_1=a_0+a_1$ we get
\begin{align}\label{E:res}
(a_1-a_0)\alpha_3& =0,  \notag\\
(a_1 - (k-2)a_0)\alpha_k  & = \frac k 2 a_1\sum\limits_{j=4}^k
\alpha_{j-1}\alpha_{k-j+3}, \qquad 4 \leq k \leq n-1, \notag\\
(2-n)a_0\theta +a_1\alpha_n &= \frac n 2 a_1\sum\limits_{j=4}^n
\alpha_{j-1}\alpha_{n-j+3}.
\end{align}

Similarly to the above argumentations we derive
\[d([e_1, e_1]) = [d(e_1), e_1] +  [e_1, d(e_1)]=2(a_0+a_1)\alpha_3e_3+\sum\limits_{k=4}^{n}\Big((2a_0+2a_1)\alpha_k + \sum\limits_{j=4}^k
a_{j-2}\alpha_{k-j+3}\Big)e_k.\]

On the other hand
\begin{align*}
d([e_1, e_1]) &= d\Big(\sum\limits_{k=3}^n\alpha_ke_k\Big) = \sum\limits_{k=3}^n\alpha_k d(e_k)\\
&{} = (3a_0+a_1)\alpha_3e_3+\sum\limits_{k=4}^n \Big( (ka_0+a_1)\alpha_k +
\sum\limits_{j=4}^k a_{j-2}\alpha_{k-j+3}+ \frac k 2
a_1\sum\limits_{j=4}^k \alpha_{j-1}\alpha_{k-j+3}\Big)e_k.
\end{align*}

Comparing the coefficients at the basic elements we obtain
\[(a_1-a_0)\alpha_3=0\]
and the restriction \eqref{E:der}, i.e. $(a_1 - (k-2)a_0)\alpha_k  = \frac
k 2 a_1\sum\limits_{j=4}^k \alpha_{j-1}\alpha_{k-j+3}, \quad 4
\leq k \leq n$.

From \eqref{E:der} for $k=n$ and the restriction \eqref{E:res}, we have $a_0(\theta -
\alpha_n) = 0$.

Considering the properties of the derivation for $d([e_i, e_2]),  \ 3
\leq i \leq n-2$, we have the same restrictions.
\end{proof}

From Proposition \ref{pr1} it is obvious that if there exist the
pair $a_0, a_1$ such that $(a_0, a_1) \neq (0, 0)$ and the
restriction \eqref{E:der} holds, then a filiform Leibniz algebra of the
first family is non-characteristically nilpotent, otherwise is
characteristically nilpotent.

From \cite{CLOK2} and \cite{Omi1} it is known that the naturally
graded filiform Leibniz algebra (the algebra with $\alpha_i =0, 3
\leq i \leq n, \theta =0$) is non-characteristically nilpotent.

\begin{thm} Let $\theta \neq \alpha_n$ and suppose that there exist $\alpha_k \neq 0, \
3 \leq k \leq n$. Then a filiform Leibniz algebra of the family
$F_1(\alpha_3, \alpha_4, \dots, \alpha_n, \theta)$ is
characteristically nilpotent.
\end{thm}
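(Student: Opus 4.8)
The plan is to show that under the stated hypotheses every derivation of $L$ is nilpotent. By Proposition~\ref{pr1} a derivation is an upper triangular matrix with diagonal entries $a_0,\ a_0+a_1,\ 2a_0+a_1,\ \dots,\ na_0+a_1$, and such a matrix is nilpotent exactly when all of these vanish, i.e. when $a_0=a_1=0$. So it suffices to prove that the derivation constraints of Proposition~\ref{pr1} force $a_0=a_1=0$; the conclusion then follows from the criterion recorded immediately after that proposition.

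First I would exploit the hypothesis $\theta\neq\alpha_n$. The relation $a_0(\theta-\alpha_n)=0$ forces $a_0=0$ at once, since $\theta-\alpha_n\neq 0$. The real work is to show $a_1=0$ as well, and here I would argue by contradiction, assuming $a_1\neq 0$. Substituting $a_0=0$, the constraint $\alpha_3(a_1-a_0)=0$ gives $\alpha_3=0$, while dividing \eqref{E:der} by $a_1$ produces the recursion $\alpha_k=\frac{k}{2}\sum_{j=4}^{k}\alpha_{j-1}\alpha_{k-j+3}$ for $4\leq k\leq n$. After the reindexing $m=j-1$ the right-hand side becomes $\frac{k}{2}\sum_{m=3}^{k-1}\alpha_m\alpha_{k-m+2}$, in which every factor $\alpha_m$ has index $m$ with $3\leq m\leq k-1$.

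A strong induction then finishes the contradiction: the base case is $\alpha_3=0$, and if $\alpha_3=\dots=\alpha_{k-1}=0$ then each summand above contains a factor $\alpha_m$ with $3\leq m\leq k-1$, so $\alpha_k=0$. Thus $\alpha_3=\alpha_4=\dots=\alpha_n=0$, contradicting the assumption that some $\alpha_k\neq 0$. Hence $a_1=0$, so $a_0=a_1=0$, every derivation has zero diagonal and is therefore nilpotent, and $L$ is characteristically nilpotent. I do not anticipate a genuine obstacle; the only point needing care is the index bookkeeping in the recursion, ensuring that each product carries a factor already known to vanish. The essential role of the hypothesis $\theta\neq\alpha_n$ is precisely to make $a_0=0$ for free, which is what unlocks the clean recursion for the $\alpha_k$.
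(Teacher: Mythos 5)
Your proposal is correct and follows essentially the same route as the paper: use $a_0(\theta-\alpha_n)=0$ to get $a_0=0$, then observe that the quadratic sum in \eqref{E:der} only involves $\alpha_m$ with $m<k$, so the first nonzero $\alpha_k$ forces $a_1=0$. Your contradiction-plus-induction phrasing is just a repackaging of the paper's ``first non-zero $\alpha_k$'' argument, with the index bookkeeping done correctly.
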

\begin{proof} Note that it is sufficient to prove $a_0 = a_1=0$.

Let $\theta \neq \alpha_n$, then  the restriction \eqref{E:der}
implies that $a_0 =0$ and we get \[\alpha_3a_1=0, \quad
a_1\alpha_k  = \frac k 2 a_1\sum\limits_{j=4}^k
\alpha_{j-1}\alpha_{k-j+3}, \quad 4 \leq k \leq n.\]

If there exist $\alpha_k \neq 0$, then for the first non-zero
$\alpha_k \neq 0$, we have $\alpha_k a_1 = 0$. Hence $a_1=0$.
\end{proof}

From the above theorem we have that an algebra of the class
$F_1(0, 0, \dots, 0, \theta), \ \theta \neq 0$, is
non-characteristically nilpotent.

\begin{rem} Note that in notations of Theorem \ref{thm2}  putting $A=
\sqrt[\uproot{3} n-2]{\theta}$, we conclude that an algebra $F_1(0, 0, \dots, 0,
\theta)$, $\theta \neq 0$, is isomorphic to the algebra $F_1(0, 0,
\dots, 0, 1)$.
\end{rem}

Below, we present an example which shows that Theorem \ref{tnil} is
not true in general.

\begin{ex} Let $L$ be a 6-dimensional filiform Leibniz
algebra and let $\{e_0, e_1, e_2, e_3, e_4, e_5\}$ be a basis of
$L$ with the following multiplication:
\[\left\{\begin{array}{ll}
[e_0,e_0]=e_{2},&  \\[1mm]
[e_i,e_0]=e_{i+1}, & \  1\leq i \leq {4},\\[1mm]
[e_0,e_1]=e_3 - 2e_4+5e_5, & \\[1mm]
[e_1,e_1]= e_3 - 2e_4+5e_5,& \\[1mm]
[e_2,e_1]= e_4 - 2e_5,& \\[1mm]
[e_3,e_1]= e_5, & \\[1mm]
\end{array} \right.\]
(omitted products are equal to zero).
\end{ex}
Clearly, this algebra satisfies the condition of Theorem
\ref{tnil}, but it is non-characteristically nilpotent, because
the derivations of the algebra have the form:
 \[\begin{pmatrix}
    a_1& a_1& a_3&a_4&a_5&a_6\\
    0& 2a_1& a_3&a_4&a_5&b_6\\
    0& 0& 3a_1&a_1 + a_3&-2a_1+a_4&5a_1+a_5\\
    0& 0& 0& 4a_1&2a_1 + a_3&-4a_1+a_4\\
    0& 0& 0& 0& 5a_1&3a_1 + a_3\\
    0& 0& 0& 0& 0& 6a_1
 \end{pmatrix}.\]

Let us now consider the case $\alpha_n=\theta$.

\begin{lem}\label{l4} Let $L$ be a non-characteristically nilpotent filiform Leibniz
algebra from the family $F_1(\alpha_3, \alpha_4, \dots, \alpha_n,
\alpha_n)$ and let $\alpha_{s} \neq 0$ be the first non-zero
parameter from $\{\alpha_3, \alpha_4, \dots, \alpha_n\}$. Then
\[
\alpha_k=
\begin{cases}
0, &\text{if  \ $k \not\equiv s  \ \big(\bmod  (s-2)\big)$;}\\
(-1)^t C^{s-1}_{t+1} \alpha_{s}^{t+1}, &\text{if \ $k \equiv s \ \big(\bmod  (s-2)\big)$,}
\end{cases}
\]
where $3 \leq k \leq n$, $t = \frac{k-s}
{s-2}$
 and $C^{p}_{n} = \frac {1} {(p-1)n+1} \dbinom{pn}{n}$ is the $p$-th Catalan
number.
\end{lem}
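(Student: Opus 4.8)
The plan is to convert non\-characteristic nilpotency into the existence of a non\-nilpotent derivation, read off the linear constraints it forces on the pair $(a_0,a_1)$, and then treat \eqref{E:der} as a quadratic recurrence that pins down every $\alpha_k$. By the criterion stated just after Proposition \ref{pr1}, non\-characteristic nilpotency of $L$ means there is a pair $(a_0,a_1)\neq(0,0)$ satisfying \eqref{E:der}. Since $\alpha_3=\dots=\alpha_{s-1}=0$ and $\alpha_s\neq0$, the right\-hand side of \eqref{E:der} at $k=s$ vanishes (each factor $\alpha_{j-1}$ there has index below $s$), so $\alpha_s\big(a_1-(s-2)a_0\big)=0$ gives $a_1=(s-2)a_0$; the same conclusion follows from $\alpha_3(a_1-a_0)=0$ when $s=3$. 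If $a_0=0$ then $a_1=0$, contradicting $(a_0,a_1)\neq(0,0)$, so $a_0\neq0$. Substituting $a_1=(s-2)a_0$ into \eqref{E:der} and cancelling $a_0$ leaves the reduced recurrence
\[
(s-k)\,\alpha_k=\frac{k}{2}(s-2)\sum_{j=4}^{k}\alpha_{j-1}\alpha_{k-j+3},\qquad k>s,
\]
which determines $\alpha_k$ from the earlier $\alpha_m$ because $s-k\neq0$ for $k>s$.

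Next I would prove the vanishing statement by strong induction on $k$. Assuming $\alpha_m=0$ for all $m<k$ with $m\not\equiv s\pmod{s-2}$, a product $\alpha_{j-1}\alpha_{k-j+3}$ appearing in the sum can be non\-zero only when \emph{both} indices lie in the residue class of $s$. Their indices sum to $k+2$, and since $s\equiv2\pmod{s-2}$ this gives $k+2\equiv 2s\equiv s+2\pmod{s-2}$, hence $k\equiv s\pmod{s-2}$. Therefore, when $k\not\equiv s\pmod{s-2}$ the whole right\-hand side is zero while $s-k\neq0$, forcing $\alpha_k=0$.

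For the non\-zero class I would induct on $t=\tfrac{k-s}{s-2}$. The base case $t=0$ is immediate from $C^{s-1}_1=\tfrac{1}{s-1}\binom{s-1}{1}=1$. For the step, write $k=s+t(s-2)$; by the vanishing part the only surviving terms are those with $j-1=s+p(s-2)$ for $0\le p\le t-1$, whose partner index is $s+(t-1-p)(s-2)$. Inserting the induction hypothesis, each surviving product equals $(-1)^{t-1}C^{s-1}_{p+1}C^{s-1}_{t-p}\,\alpha_s^{\,t+1}$, and after re\-indexing ($a=p+1$) the recurrence, together with $s-k=-t(s-2)$, gives
\[
\alpha_k=-\frac{k}{2t}\,(-1)^{t-1}\alpha_s^{\,t+1}\sum_{a=1}^{t}C^{s-1}_{a}C^{s-1}_{t+1-a}.
\]
The remaining sum is precisely the convolution \eqref{E:comb} with $p=s-1$ and $n=t$ (note $(s-2)t+s=(p-1)t+p+1$), namely $\sum_{a=1}^{t}C^{s-1}_{a}C^{s-1}_{t+1-a}=\tfrac{2t}{k}\,C^{s-1}_{t+1}$; substituting this collapses the coefficient to yield $\alpha_k=(-1)^t C^{s-1}_{t+1}\alpha_s^{\,t+1}$, closing the induction.

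The main obstacle is the bookkeeping in this last step: correctly identifying which summands survive, tracking the sign $(-1)^{p}(-1)^{t-1-p}=(-1)^{t-1}$ and the common power $\alpha_s^{\,t+1}$, and re\-indexing so that the emerging sum is recognizably the generalized Catalan convolution \eqref{E:comb}. Once $a_1=(s-2)a_0$ and $a_0\neq0$ are in hand, everything else is a routine cancellation.
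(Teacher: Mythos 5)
Your proposal is correct and follows essentially the same route as the paper's own proof: deduce $a_1=(s-2)a_0$ and $a_0\neq 0$ from non-characteristic nilpotency, reduce \eqref{E:der} to the recurrence $(s-k)\alpha_k=\frac{k}{2}(s-2)\sum_{j=4}^{k}\alpha_{j-1}\alpha_{k-j+3}$, kill the off-residue-class terms by the parity-of-indices argument, and collapse the surviving convolution via \eqref{E:comb}. The only cosmetic difference is that you run a strong induction on $k$ (split into the vanishing part and the $t$-indexed part) where the paper inducts on $\lfloor\frac{k-s}{s-2}\rfloor$; the content is identical.
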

\begin{proof} Since $\alpha_{s} \neq 0$, from the
equality \eqref{E:der}, we obtain $(a_1 - (s-2)a_0)\alpha_s =0$, and
consequently, $a_1 = (s-2)a_0$. Replacing $a_1 = (s-2)a_0$ we have
\[(s-k)a_0\alpha_k = \frac k 2 (s-2) a_0 \sum\limits_{j=4}^k\alpha_{j-1}\alpha_{k-j+3}, \quad k \geq s+1.\]

Since the algebra is non-characteristically nilpotent, we have
$a_0 \neq 0$ and
\begin{equation} \label{E:ak}
\alpha_k = \frac {k(s-2)} {2(s-k)}\sum\limits_{j=4}^k\alpha_{j-1}\alpha_{k-j+3}.
\end{equation}

We will prove the statement of the lemma by induction on $l= \lfloor \frac
{k-s} {s-2} \rfloor$, where $\lfloor x\rfloor$ is the integer part of $x$.

The base of induction $l=0$ is straightforward from the condition
of the lemma.

Let us suppose the induction hypothesis is true for $t<l$ and we will
prove it for $l = \lfloor \frac {k-s} {s-2}\rfloor$.

From equality \eqref{E:ak} we have that if $k \not\equiv s \ \big(\bmod
(s-2)\big)$, then one of the values of $j-1$ and $k-j+3$ are not
congruent by $ \bmod \, (s-2)$ with $s$, simultaneously. Otherwise, if
$j-1 \equiv s \ \big(\bmod  (s-2)\big)$ and $k-j+3 \equiv s \ \big(\bmod
(s-2)\big)$, then $k \equiv s \ \big(\bmod (s-2)\big)$, which is a
contradiction. Thus, by induction hypothesis, we have
$\alpha_{j-1}\alpha_{k-j+3} =0$ for any $j \ (4 \leq j \leq k)$,
which implies $\alpha_k = 0$ for any $k \not\equiv s \ \big(\bmod
(s-2)\big)$ with $\lfloor\frac {k-s} {s-2}\rfloor =l$.

If $k \equiv s \ \big(\bmod  (s-2)\big)$, i.e. $k = s+ (s-2)m$  then
\begin{align*}
\alpha_k &= \frac {(s+(s-2)m)(s-2)}
{2(s-s-(s-2)m)}\sum\limits_{j=4}^{s+(s-2)m}\alpha_{j-1}\alpha_{s+(s-2)m-j+3}\\
& {} = - \frac {s+(s-2)m} {2m}\sum\limits_{j=4}^{s+(s-2)m}\alpha_{j-1}\alpha_{s+(s-2)m-j+3}
= - \frac {s+(s-2)m}
{2t}\sum\limits_{j=s+1}^{(s-2)t+3}\alpha_{j-1}\alpha_{s+(s-2)t-j+3}.
\end{align*}
Changing $j-1 = s+ (s-2)j'$ and using induction hypothesis, we
obtain
\begin{align*}
\alpha_k & ={} - \frac {s+(s-2)t} {2t}\sum\limits_{j'=0}^{t-1}\alpha_{s+(s-2)j'}\alpha_{s+(s-2)(t-j'-1)}\\
&{} = - \frac {s+(s-2)t} {2t} (-1)^{t-1}\alpha_s^{t+1} \sum\limits_{j=0}^{t-1} C_{j+1}^{s-1}
C_{t-j}^{s-1}=  (-1)^{t}\alpha_s^{t+1} \Big(\frac {s+(s-2)t} {2t}
\sum\limits_{j=1}^{t} C_{j}^{s-1} C_{t+1-j}^{s-1}\Big).
\end{align*}
Applying formula \eqref{E:comb}, we derive
\[\alpha_k = (-1)^{t}\alpha_s^{t+1} C_{t+1}^{s-1},\]
where $3 \leq k \leq n$, $t = \frac{k-s} {s-2}$
 and $C^{p}_{n}$  is the $p$-th Catalan
number.
\end{proof}

Below, we present the classification of algebras obtained in Lemma
\ref{l4}.

\begin{thm} Let $L$ be a non-characteristically nilpotent filiform Leibniz algebra
of the family $F_1(\alpha_3, \alpha_4, \dots, \alpha_n,
\alpha_n)$. Then it is isomorphic to one of the following pairwise
non-isomorphic algebras:
\[F_1^{s}(\alpha_3, \alpha_4, \dots, \alpha_n, \alpha_n), \qquad 3 \leq s \leq n,\]
where
\[
\alpha_k=
\begin{cases}
0, &\text{if  \ $k \not\equiv s  \ \big(\bmod  (s-2)\big)$;}\\
(-1)^t C^{s-1}_{t+1}, &\text{if \ $k \equiv s \ \big(\bmod  (s-2)\big)$ \ for \ $  t = \dfrac{k-s} {s-2}$,}
\end{cases}
\]
$3 \leq k \leq n$ and  $C^{p}_{n} $ is the $p$-th Catalan number.
\end{thm}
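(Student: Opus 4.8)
The plan is to derive the classification directly from Lemma~\ref{l4} in two moves: first normalize the leading parameter $\alpha_s$ to $1$ by a scaling isomorphism taken from Theorem~\ref{thm2}(a), and then prove that the index $s$ of the first non-zero parameter is an isomorphism invariant, which will give the pairwise non-isomorphism.

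By Lemma~\ref{l4}, the whole string $(\alpha_3,\dots,\alpha_n)$ of a non-characteristically nilpotent algebra in the family $F_1(\alpha_3,\dots,\alpha_n,\alpha_n)$ is completely determined by its first non-zero entry $\alpha_s$ through $\alpha_k=(-1)^tC^{s-1}_{t+1}\alpha_s^{t+1}$ with $t=\frac{k-s}{s-2}$ for $k\equiv s \pmod{s-2}$, and $\alpha_k=0$ otherwise. To normalize, I would apply Theorem~\ref{thm2}(a) with $B=0$: every correction term in the transformation rule carries a positive power of $B$, hence all of them vanish, and the rule collapses to the pure scaling $\alpha'_k=\alpha_k/A^{k-2}$ together with $\theta'=\theta/A^{n-2}$ (the constraint $A(A+B)\neq0$ becomes $A^2\neq0$). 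Choosing $A$ with $A^{s-2}=\alpha_s$ and using the identity $k-2=(s-2)(t+1)$ valid for $k\equiv s\pmod{s-2}$, one gets $A^{k-2}=\alpha_s^{t+1}$, which cancels the factor $\alpha_s^{t+1}$ precisely; hence $\alpha'_k=(-1)^tC^{s-1}_{t+1}$, that is, $L\cong F_1^s$. Since $\theta=\alpha_n$ scales exactly as $\alpha_n$ does, the relation $\theta'=\alpha'_n$ persists, so the normalized algebra remains in the family.

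For the non-isomorphism it suffices to show that $s$ is preserved by the isomorphisms of Theorem~\ref{thm2}(a). I would argue by induction on $t$ that $\alpha'_t=0$ for $t<s$. The recursive formula writes $\alpha'_t$ as $\frac{A+B}{A^{t-1}}\alpha_t$ minus a sum $\sum_{k=3}^{t-1}(\cdots)\,\alpha'_k$ in which every summand carries a factor $\alpha'_k$ with $k<t$. If $\alpha_3=\dots=\alpha_{s-1}=0$, then for $t<s$ the leading term vanishes (because $\alpha_t=0$) and the sum vanishes by the induction hypothesis, so $\alpha'_t=0$; at $t=s$ the same computation gives $\alpha'_s=\frac{A+B}{A^{s-1}}\alpha_s\neq0$, using $A(A+B)\neq0$. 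Thus every isomorphic copy again has its first non-zero parameter at index $s$, so $F_1^s\not\cong F_1^{s'}$ whenever $s\neq s'$.

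I expect the only delicate point to be the bookkeeping in the normalization: matching the exponent $k-2$ against $(s-2)(t+1)$ and confirming that setting $B=0$ annihilates all the multi-fold correction sums appearing in Theorem~\ref{thm2}(a). Once the transformation has been reduced to the diagonal scaling $\alpha_k\mapsto\alpha_k/A^{k-2}$, both the normalization and the invariance of $s$ follow with essentially no further combinatorial input, so no additional use of the convolution identity~\eqref{E:comb} is required beyond what Lemma~\ref{l4} already provides.
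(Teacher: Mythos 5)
Your proof is correct, and it rests on the same two pillars as the paper's own argument --- Lemma~\ref{l4} together with the isomorphism criterion of Theorem~\ref{thm2}(a) --- but the normalization is executed differently. The paper keeps $A$ free and chooses $B=\frac{A^{s-1}}{\alpha_s}-A$ so that $\alpha'_s=1$; since the $B$-dependent correction terms then do not vanish, it does not compute the remaining $\alpha'_k$ directly but instead re-applies Lemma~\ref{l4} to the transformed algebra (legitimate because isomorphism preserves non-characteristic nilpotency, though it silently uses that $\theta'=\alpha'_n$ still holds and that the index $s$ is unchanged). You instead take $B=0$ and $A^{s-2}=\alpha_s$, which collapses the transformation to the diagonal scaling $\alpha'_k=\alpha_k/A^{k-2}$, $\theta'=\theta/A^{n-2}$, and the identity $k-2=(s-2)(t+1)$ then cancels the factor $\alpha_s^{t+1}$ exactly; this makes every $\alpha'_k$ and $\theta'$ explicit and avoids any re-application of the lemma. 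You also supply the inductive argument (via the triangular structure of the recursion for $\alpha'_t$ and the condition $A(A+B)\neq0$) that the index of the first non-zero parameter is an isomorphism invariant, which is precisely what the paper's claim of pairwise non-isomorphism needs but which the paper states without proof. Both routes are valid; yours is more self-contained on exactly the two points the paper leaves implicit.
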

\begin{proof}
From Lemma \ref{l4} we have
\[
\alpha_k=
\begin{cases}
0, &\text{if  \ $k \not\equiv s  \ \big(\bmod  (s-2)\big)$;}\\
(-1)^t C^{s-1}_{t+1} \alpha_{s}^{t+1}, &\text{if \ $k \equiv s \ \big(\bmod  (s-2)\big)$.}
\end{cases}
\]
From Theorem \ref{thm2}, we have the isomorphism criterion
\[\alpha'_s=\frac{1}{A^{s-1}} (A+B)\alpha_s.\]

Putting $B = \frac {A^{s-1}} {\alpha_s} -A$, we get $\alpha'_s=1$.
Thus, without loss of generality we can assume $\alpha_s=1$, then
\[
\alpha_k=
\begin{cases}
0, &\text{if  \ $k \not\equiv s  \ \big(\bmod  (s-2)\big)$;}\\
(-1)^t C^{s-1}_{t+1}, &\text{if \ $k \equiv s \ \big(\bmod  (s-2)\big)$.}
\end{cases}
\]
\end{proof}

\subsection{Non-characteristically nilpotent filiform Leibniz algebras of the family
$F_2(\beta_3,\beta_4, \dots, \beta_n, \gamma )$}

\

Now we consider  algebras of the family $F_2(\beta_3,\beta_4,
\dots, \beta_n, \gamma )$. Similar to the above section, firstly we
describe the derivations of such algebras.

\begin{prop}\label{pr2} Any derivation of a filiform Leibniz
algebra of the family $F_2(\beta_3,\beta_4, \dots, \beta_n, \gamma)$ has the form:
\[\begin{pmatrix}
a_0&a_1&a_2&a_3&\dots&a_{n-2}&a_{n-1}&a_n\\
0&b_1&0&0&\dots&0&-a_1\gamma & b_n\\
 0& 0& 2a_0&a_2+a_1\beta_3&\dots&a_{n-3}+a_1\beta_{n-2}&
a_{n-2}+a_1\beta_{n-1}&a_{n-1}+a_1\beta_n\\
 0& 0& 0& 3a_0& \dots& a_{n-4}+2a_1\beta_{n-3}& a_{n-3}+2a_1\beta_{n-2}&a_{n-2}+2a_1\beta_{n-1}\\
\vdots& \vdots& \vdots& \vdots&\vdots& \vdots&\vdots &\vdots \\
0& 0& 0& 0&\dots& (n-2)a_0&a_2+(n-3)a_1\beta_3 & a_3+(n-3)a_1\beta_4\\
 0& 0& 0& 0&\dots& 0&(n-1)a_0 & a_2+(n-2)a_1\beta_3\\
0& 0& 0& 0&\dots& 0& 0& na_0
\end{pmatrix}\]
where
\begin{align} \label{E:bk}
\gamma (2b_1 - na_0) & =0, \qquad  \qquad \beta_3(b_1-2a_0)=0, \notag \\
\beta_k(b_1-(k-1)a_0)& =\frac k 2 a_1\sum\limits_{j=4}^{k}\beta_{j-1}\beta_{k-j+3}, \qquad 4 \leq k \leq n-1, \\
\beta_n(b_1-(n-1)a_0) & = - a_1\gamma + \frac n 2 a_1\sum\limits_{j=4}^{n}\beta_{j-1}\beta_{n-j+3}.\ \notag
\end{align}
\end{prop}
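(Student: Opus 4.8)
\textit{The plan} is to follow the proof of Proposition \ref{pr1} step by step, replacing the $F_1$ products by those of $F_2$; the two essential changes are that in $F_2$ one has $[e_1,e_0]=0$ (rather than $[e_1,e_0]=e_2$) and the extra product $[e_1,e_1]=\gamma e_n$. Write $d(e_0)=\sum_{k=0}^n a_ke_k$ and $d(e_1)=\sum_{k=0}^n b_ke_k$. Applying $d$ to $[e_0,e_0]=e_2$ and using that the only nonzero right-multiplications by $e_0$ are $[e_0,e_0]=e_2$ and $[e_i,e_0]=e_{i+1}$ for $i\ge 2$ (so $[e_1,e_0]=0$), together with $[e_0,e_1]=\sum_{k=3}^n\beta_ke_k$ and $[e_0,e_k]=0$ for $k\ge 2$, gives $d(e_2)=2a_0e_2+\sum_{k=3}^n(a_{k-1}+a_1\beta_k)e_k$. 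Then, exactly as in Proposition \ref{pr1} but starting the recursion at $i=2$ (since $[e_i,e_0]=e_{i+1}$ holds only for $i\ge 2$), an induction via $d(e_{i+1})=[d(e_i),e_0]+[e_i,d(e_0)]$ yields
\[d(e_i)=ia_0e_i+\sum_{k=i+1}^n\bigl(a_{k-i+1}+(i-1)a_1\beta_{k-i+2}\bigr)e_k,\qquad 2\le i\le n.\]
This produces every row of the asserted matrix except the second, and in particular $d(e_n)=na_0e_n$.

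The second row must be read off from a relation other than $d(e_2)$, because $[e_1,e_0]=0$ in $F_2$. Applying $d$ to $[e_1,e_0]=0$ gives $0=[d(e_1),e_0]+[e_1,d(e_0)]$. Here $[d(e_1),e_0]=b_0e_2+\sum_{j=3}^n b_{j-1}e_j$, while $[e_1,d(e_0)]=a_1[e_1,e_1]=a_1\gamma e_n$ since $[e_1,e_k]=0$ for every $k\neq 1$. Comparing coefficients forces $b_0=0$, $b_2=\dots=b_{n-2}=0$ and $b_{n-1}=-a_1\gamma$, with $b_1$ and $b_n$ free; this is precisely the second row.

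It remains to produce the relations \eqref{E:bk}. I apply $d$ to $[e_0,e_1]=\sum_{k=3}^n\beta_ke_k$ and expand both sides. On one side $[d(e_0),e_1]+[e_0,d(e_1)]$ is computed from $[e_i,e_1]=\sum_{k=i+2}^n\beta_{k+1-i}e_k$, $[e_1,e_1]=\gamma e_n$ and $b_0=0$; on the other side $\sum_{k=3}^n\beta_k d(e_k)$ is expanded through the formula for $d(e_k)$ above. Lemma \ref{lem2} is used on both sides to collect the coefficient of each $e_k$. The cross terms (those linear in the $a_i$) coincide and cancel, leaving $(b_1-(k-1)a_0)\beta_k=a_1\sum_{m=3}^{k-1}(m-1)\beta_m\beta_{k-m+2}$ for $3\le k\le n$, where for $k=n$ the right-hand side acquires the additional summand $-a_1\gamma$ arising from $a_1[e_1,e_1]=a_1\gamma e_n$ inside $[d(e_0),e_1]$. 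Reindexing $m\mapsto j=m+1$ and using the symmetry of the convolution under $j\mapsto k-j+4$, namely $\beta_{j-1}\beta_{k-j+3}=\beta_{(k-j+4)-1}\beta_{k-(k-j+4)+3}$, replaces the weight $(m-1)$ by its average $\frac{k}{2}$, so the right-hand side becomes $\frac{k}{2}a_1\sum_{j=4}^k\beta_{j-1}\beta_{k-j+3}$; this is the second and third lines of \eqref{E:bk}, the case $k=3$ degenerating to $\beta_3(b_1-2a_0)=0$. Finally, the relation $\gamma(2b_1-na_0)=0$ follows from applying $d$ to $[e_1,e_1]=\gamma e_n$: since $d(e_n)=na_0e_n$ and $[d(e_1),e_1]=[e_1,d(e_1)]=b_1\gamma e_n$, equating $na_0\gamma e_n=2b_1\gamma e_n$ gives the claim. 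Applying $d$ to $[e_i,e_1]=\sum_{k=i+2}^n\beta_{k+1-i}e_k$ for $2\le i\le n-2$ yields no new conditions, just as $d([e_i,e_2])$ did in Proposition \ref{pr1}.

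I expect the only delicate step to be the third paragraph: keeping the two expansions of $d([e_0,e_1])$ aligned through the index shifts required by Lemma \ref{lem2}, checking that the $a$--$\beta$ cross terms match and cancel, and recognizing the convolution symmetry that turns the coefficient $(m-1)$ into $\frac{k}{2}$. Everything else is a faithful transcription of the $F_1$ computation, with $[e_1,e_0]=0$ and $[e_1,e_1]=\gamma e_n$ replacing $[e_1,e_0]=e_2$.
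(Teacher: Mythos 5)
Your proposal is correct and follows essentially the same route as the paper: compute $d(e_2)$ from $[e_0,e_0]$, induct to get $d(e_i)$, read the second row off $d([e_1,e_0])=0$, get $\gamma(2b_1-na_0)=0$ from $[e_1,e_1]=\gamma e_n$, and obtain the $\beta_k$ relations by expanding $d([e_0,e_1])$ both ways with Lemma \ref{lem2} and the convolution symmetry that turns the weight $m-1$ into $\tfrac k2$. The only notable difference is that you make the symmetrization step explicit, whereas the paper states the $\tfrac k2$ coefficient without comment.
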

\begin{proof} Let $L$ be a filiform Leibniz algebra from the
second family and let $d: L \rightarrow L$ be a derivation of $L$.

We set
\[d(e_0) = \sum\limits_{k=0}^na_ke_k, \quad d(e_1) = \sum\limits_{k=0}^nb_ke_k.\]

From the property of the derivation $d$ one has
\begin{align*}
d(e_2) &= d([e_0, e_0]) = [d(e_0), e_0] +  [e_0, d(e_0)] =
\big[\sum\limits_{k=0}^na_ke_k, e_0\big]+\big[e_0, \sum\limits_{k=0}^na_ke_k\big]\\
&{} =a_0e_2 + \sum\limits_{k=3}^na_{k-1}e_k + a_0e_2 + a_1\sum\limits_{k=3}^n\beta_{k}e_k
= 2a_0e_2 + \sum\limits_{k=3}^n(a_{k-1}+a_1\beta_{k})e_k.
\end{align*}

By induction it is not difficult to obtain
\[d(e_i)=ia_0e_i + \sum\limits_{k=i+1}^n\big(a_{k+1-i}+(i-1)a_1\beta_{k+2-i}\big)e_k, \quad 2 \leq i \leq n.\]

Consider the property of the derivation
\begin{align*}
d([e_1, e_0]) &= [d(e_1), e_0] +  [e_1, d(e_0)] =\big[\sum\limits_{k=0}^nb_ke_k, e_0\big]+\big[e_1, \sum\limits_{k=0}^na_ke_k\big]  \\
&{}= b_0e_2+\sum\limits_{k=3}^{n}b_{k-1}e_k + a_1 \gamma e_n.
\end{align*}

On the other hand
\[d([e_1, e_0]) = 0.\]

Consequently, $b_0 = b_2=b_4 = \dots = b_{n-2} = 0, \quad b_{n-1}
= -a_1\gamma$.

From the chain of equalities
\begin{align*}
n a_0\gamma e_n & =d(\gamma e_n) =d([e_1, e_1]) = [d(e_1), e_1] +  [e_1, d(e_1)] \\
&{}=[b_1e_1 + b_{n-1}e_{n-1} + b_ne_n, e_1]+[e_1, b_1e_1 + b_{n-1}e_{n-1} + b_ne_n] = 2b_1\gamma e_n,
\end{align*}
we get $(2b_1- na_0)\gamma =0$.

Using Lemma \ref{lem2} and derivation property, we obtain
\begin{align*}
d([e_0, e_1]) & = [d(e_0), e_1] +  [e_0, d(e_1)]\\
&{}=(a_0+b_1)\beta_3e_3+\sum\limits_{k=4}^n(a_0+b_1)\beta_ke_k+a_1\gamma e_n +
\sum\limits_{k=4}^{n}\Big(\sum\limits_{j=4}^ka_{j-2}\beta_{k-j+3}\Big)e_k.
\end{align*}

On the other hand
\begin{align*}
d([e_0, e_1]) & = d\Big(\sum\limits_{k=3}^n\beta_{k}e_k\Big) = \sum\limits_{k=3}^n\beta_{k}d(e_k) \\
&{}=3a_0\beta_3e_3+\sum\limits_{k=4}^nka_0\beta_{k}e_k
+\sum\limits_{k=4}^n\Big(\sum\limits_{j=4}^ka_{j-2}\beta_{k-j+3}\Big)e_k
+a_1\sum\limits_{k=4}^n\frac k 2\Big(\sum\limits_{j=4}^k
\beta_{j-1}\beta_{k-j+3}\Big)e_k.
\end{align*}

Comparing the coefficients at the basic elements we deduce
\begin{align*}
\beta_3(b_1-2a_0) & =0,\\
\beta_k(b_1-(k-1)a_0) & =\frac k 2 a_1\sum\limits_{j=4}^k
\beta_{j-1}\beta_{k-j+3}, \qquad \qquad 4 \leq k \leq n-1,\\
\beta_n(b_1-(n-1)a_0)& = -a_1\gamma + \frac n 2 a_1\sum\limits_{j=4}^n
\beta_{j-1}\beta_{n-j+3}.
\end{align*}

Considering the properties of the  derivation for products $d([e_i,
e_1]), \ 2 \leq i \leq n-2$, we already  get  the obtained restrictions.
\end{proof}

From Proposition \ref{pr2} it is obvious that if there exists the
pair $a_0, b_1$ such that $(a_0, b_1) \neq (0, 0)$ and the
restriction \eqref{E:bk} holds, then a filiform Leibniz algebra is
non-characteristically nilpotent, otherwise is characteristically
nilpotent.

It is known that a naturally graded filiform Leibniz algebra of
the second family (an algebra with $\gamma =0$ and $\beta_i =0, \
3 \leq i \leq n$) is non-characteristically nilpotent \cite{CLOK2,Omi1}.

\begin{thm}\label{t1} Let $\gamma \neq 0$ and $n$ be odd. If there
there exist $\beta_i \neq 0, \ 3 \leq i \leq n-1$, then a filiform
Leibniz algebra from $F_2(\beta_3, \beta_4, \dots, \beta_n,
\gamma)$ is characteristically nilpotent.
\end{thm}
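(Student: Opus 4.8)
The plan is to use Proposition~\ref{pr2}, which writes every derivation $d$ as an upper-triangular matrix with diagonal $(a_0, b_1, 2a_0, 3a_0, \dots, na_0)$. An upper-triangular matrix is nilpotent exactly when its diagonal vanishes, so proving characteristic nilpotency reduces to showing that the constraints \eqref{E:bk} force $a_0 = b_1 = 0$ — this is precisely the criterion recorded immediately after the proposition. Thus the entire argument comes down to extracting $a_0 = b_1 = 0$ from the defining relations under the three hypotheses $\gamma \neq 0$, $n$ odd, and $\beta_i \neq 0$ for some $3 \leq i \leq n-1$.

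First I would exploit $\gamma \neq 0$: the relation $\gamma(2b_1 - na_0) = 0$ yields $b_1 = \tfrac{n}{2}a_0$. Next, let $\beta_s$ (with $3 \leq s \leq n-1$) be the first nonzero parameter, which exists by hypothesis. I claim the $k = s$ relation in \eqref{E:bk} has vanishing right-hand side: in the sum $\sum_{j=4}^{s}\beta_{j-1}\beta_{s-j+3}$ the factor $\beta_{j-1}$ carries index $j-1 \leq s-1 < s$, so by minimality of $s$ it is zero and every term dies. Hence $\beta_s\bigl(b_1 - (s-1)a_0\bigr) = 0$ (for $s = 3$ this is the relation $\beta_3(b_1 - 2a_0) = 0$ directly), and since $\beta_s \neq 0$ we obtain $b_1 = (s-1)a_0$.

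Combining the two expressions for $b_1$ gives $(s-1)a_0 = \tfrac{n}{2}a_0$, equivalently $(2s - 2 - n)a_0 = 0$. Here the parity hypothesis is decisive: $2s - 2$ is even while $n$ is odd, so $2s - 2 - n$ is a nonzero integer, forcing $a_0 = 0$ and therefore $b_1 = 0$. With the whole diagonal of $d$ equal to zero, every derivation is nilpotent and the algebra is characteristically nilpotent.

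The argument is short, so I expect no serious obstacle; the only point needing care is the vanishing of the right-hand side at $k = s$, which rests on observing that the inner index $j-1$ never reaches $s$ in the range $4 \leq j \leq s$. The restriction $i \leq n-1$ is what lets me land on a \emph{clean} relation: had the first nonzero parameter been $\beta_n$, the corresponding relation carries the extra term $-a_1\gamma$ and its right-hand side need not vanish, so the bound $s \leq n-1$ is essential. Note also that oddness of $n$ is used solely to guarantee $2s - 2 - n \neq 0$ across the admissible range $3 \leq s \leq n-1$, and that the parameter $a_1$ and the $\beta_n$ relation play no role in forcing the diagonal to vanish.
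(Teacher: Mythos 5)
Your proposal is correct and follows essentially the same route as the paper: use $\gamma(2b_1-na_0)=0$ to get $b_1=\tfrac{n}{2}a_0$, observe that the relation for the first nonzero $\beta_s$ ($3\le s\le n-1$) has vanishing right-hand side and hence forces $b_1=(s-1)a_0$, and conclude $a_0=b_1=0$ from the oddness of $n$. Your write-up is merely a bit more explicit than the paper's about why the quadratic sum vanishes at $k=s$ and why the bound $s\le n-1$ matters.
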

\begin{proof} If $\gamma \neq 0$, then since $\gamma(2b_1 - na_0) =0$, this
implies that $b_1 = \frac{n a_0} {2} $ and we get, that the restrictions
\eqref{E:bk} have the form
\begin{align*}
\frac{(n-4)a_0} {2}\beta_3 & =0,\\
\frac{n-2k+2} 2 \beta_ka_0& =\frac k 2 a_1\sum\limits_{j=4}^{k}\beta_{j-1}\beta_{k-j+3}, \qquad \qquad  4 \leq k \leq n-1,\\
\frac{-n+2} 2 \beta_na_0 & = - a_1\gamma + \frac n 2 a_1\sum\limits_{j=4}^{n}\beta_{j-1}\beta_{n-j+3}.
\end{align*}

If there exist $\beta_k \neq 0, \ 3 \leq k \leq n-1$, then for the
first non-zero $\beta_k \neq 0$, we get \[(n-2k+2)a_0 \beta_k =
0.\] Since $n$ is odd, we conclude $a_0=b_1=0$.
\end{proof}

Let us clarify the situation when $\beta_i =0$ for $3 \leq i \leq
n-1$.

\begin{thm} Let $\gamma \neq 0$ and $n$
be odd. Then any non-characteristically nilpotent filiform Leibniz
algebra of the second class is isomorphic to the algebra \[F_2(0, 0,
\dots, 0, 0, 1).\]
\end{thm}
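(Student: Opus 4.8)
The plan is to combine Theorem \ref{t1} with the isomorphism criterion of Theorem \ref{thm2}(b). First I would use the hypotheses to reduce the table of multiplication to a normal form. Since the algebra is non-characteristically nilpotent with $\gamma \neq 0$ and $n$ odd, Theorem \ref{t1} forbids the existence of any $\beta_i \neq 0$ with $3 \leq i \leq n-1$; were such a $\beta_i$ present, the algebra would be characteristically nilpotent, contradicting the hypothesis. Hence the algebra necessarily has the reduced form $F_2(0,0,\dots,0,\beta_n,\gamma)$ with $\gamma \neq 0$, and it remains only to show that every such algebra is isomorphic to $F_2(0,\dots,0,0,1)$.

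Next I would exploit the triangular (recursive) shape of the isomorphism formulas in Theorem \ref{thm2}(b). Because $\beta_3 = \dots = \beta_{n-1} = 0$, the relation $\beta'_3 = \frac{D}{A^2}\beta_3$ immediately gives $\beta'_3 = 0$, and an induction on $t$ then shows $\beta'_t = 0$ for all $3 \leq t \leq n-1$: the criterion expresses $\beta'_t$ as $\frac{1}{A^{t-1}}\bigl(D\beta_t - \sum_{k=3}^{t-1}(\cdots)\beta'_k\bigr)$, and every summand vanishes since $\beta_t = 0$ and $\beta'_k = 0$ for $k<t$ by the induction hypothesis, so the deep convolution-type coefficients multiplying $\beta'_k$ never need to be evaluated. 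The same mechanism applies to the expression for $\beta'_n$: the sum $\sum_{k=3}^{n-1}(\cdots)\beta'_k$ collapses to zero, leaving the two genuinely nontrivial transformed constants $\gamma' = \frac{D^2}{A^n}\gamma$ and $\beta'_n = \frac{BD\gamma}{A^n} + \frac{D\beta_n}{A^{n-1}}$.

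Finally I would solve these two equations to produce the desired change of basis. Imposing $\gamma' = 1$ requires $D^2 = A^n/\gamma$, and imposing $\beta'_n = 0$ gives $B = -\frac{\beta_n A}{\gamma}$. Working over $\mathbb{C}$, one may take, for example, $A = 1$, a square root $D$ of $1/\gamma$ (which is nonzero because $\gamma \neq 0$), and $B = -\frac{\beta_n}{\gamma}$; these satisfy $AD \neq 0$, so by Theorem \ref{thm2}(b) the algebra is isomorphic to $F_2(0,\dots,0,0,1)$. The only point requiring care is the claim that all the multiple summations in the isomorphism formulas vanish, and the main obstacle is precisely verifying this; however, the triangularity of the recursion together with $\beta_3 = \dots = \beta_{n-1} = 0$ makes every such sum vanish termwise, so no actual combinatorial computation is needed and the argument reduces to the elementary two-variable system above.
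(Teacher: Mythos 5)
Your proposal is correct and follows essentially the same route as the paper: apply Theorem \ref{t1} to kill $\beta_3,\dots,\beta_{n-1}$, observe that the triangular isomorphism formulas then reduce to $\gamma'=\frac{D^2}{A^n}\gamma$ and $\beta'_n=\frac{BD\gamma}{A^n}+\frac{D\beta_n}{A^{n-1}}$, and choose $D$ and $B$ to normalize $\gamma'=1$, $\beta'_n=0$. The only (inessential for the statement as phrased) item the paper adds is the explicit exhibition of a non-singular derivation of $F_2(0,\dots,0,\beta_n,\gamma)$, confirming that the normal form obtained is indeed non-characteristically nilpotent.
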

\begin{proof}
Theorem \ref{t1} implies that if $n$ is odd, then
 a non-characteristically nilpotent filiform Leibniz algebra of the
second family has the form $F_2(0, 0, \dots, 0, \beta_n, \gamma)$,
where
\[\frac{-n+2} 2\beta_n a_0 = - a_1\gamma.\]

Since $\gamma \neq 0$, then for any $\beta_n$ putting $a_0 \neq 0$
and $a_1 = \frac{(n-2)\beta_n a_0} {2 \gamma}$,
we have a non-singular derivation.

From Theorem \ref{thm2}, we derive $\beta'_k = 0, \ 3 \leq k \leq
n-1$, and the isomorphism criterion is

\[\gamma'=\frac{D^2}{A^{n}} \gamma, \quad
\beta'_n=\frac{BD \gamma}{A^{n}}\beta_n+ \frac{D}{A^{n-1}}\beta_n.
\]
Since $\gamma \neq 0$, putting $D=\sqrt{\frac{A^{n}} {\gamma}}$, we get
\[\gamma'=1, \quad
\beta'_n=\frac {B\gamma+A\beta_n} {\sqrt{\gamma A^{n}}}.
\]
Setting $B = - \frac {A\beta_n}{\gamma}$, we have $\beta_n'=0$ and so we
obtain the algebra  $F_2(0, 0, \dots, 0, 1)$.
\end{proof}

Now we investigate the even $n$ case.
\begin{thm}\label{t3}
 Let $\gamma \neq 0$ and $n$ be even.
 If there exist $\beta_k \neq 0, \ 3 \leq k \leq n-1, \ k
\neq \frac{n+2} 2$, then an algebra of the family $F_2(\beta_3,
\beta_4, \dots, \beta_n, \gamma)$ is characteristically nilpotent.
\end{thm}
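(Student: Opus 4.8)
The plan is to reduce the statement to a single scalar condition and then run the combinatorial bookkeeping already rehearsed for Theorem \ref{t1}. By Proposition \ref{pr2} the derivation matrix is upper triangular with diagonal entries $a_0, b_1, 2a_0, 3a_0, \dots, na_0$, so every derivation is nilpotent precisely when $a_0 = b_1 = 0$; hence to prove characteristic nilpotency it suffices to show that the restrictions \eqref{E:bk} force $a_0 = b_1 = 0$. Since $\gamma \neq 0$, the first equation in \eqref{E:bk} gives $b_1 = \frac{n a_0}{2}$, so $b_1 = 0$ if and only if $a_0 = 0$; thus everything reduces to proving $a_0 = 0$.

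Substituting $b_1 = \frac{n a_0}{2}$ into \eqref{E:bk}, exactly as in the proof of Theorem \ref{t1}, rewrites the middle relations as $\frac{(n-2k+2)a_0}{2}\,\beta_k = \frac{k}{2} a_1 \sum_{j=4}^{k}\beta_{j-1}\beta_{k-j+3}$ for $4 \leq k \leq n-1$. I would argue by contradiction: suppose $a_0 \neq 0$, and let $\beta_s$ with $3 \leq s \leq n-1$ be the first non-zero parameter among $\beta_3, \dots, \beta_{n-1}$, which exists by hypothesis. The key observation is that for $s < k \leq n-1$ the convolution sum $\sum_{j=4}^{k}\beta_{j-1}\beta_{k-j+3}$ vanishes, because a non-zero summand would require the two indices $j-1$ and $k-j+3$, whose sum is $k+2$, to be simultaneously at least $s$, forcing $k \geq 2s-2$; and in the critical case $s = \frac{n+2}{2}$ this threshold equals $2s-2 = n$, which lies strictly beyond the admissible range $k \leq n-1$.

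With this in hand I would split into two cases. If $s \neq \frac{n+2}{2}$, the $k=s$ relation already has vanishing right-hand side, so $(n-2s+2)a_0\,\beta_s = 0$ with $n-2s+2 \neq 0$ and $\beta_s \neq 0$, forcing $a_0 = 0$, a contradiction. If $s = \frac{n+2}{2}$, then for each $k$ with $s < k \leq n-1$ the relation collapses to $(n-2k+2)a_0\,\beta_k = 0$; since $k \neq \frac{n+2}{2}$ we have $n-2k+2 \neq 0$, whence $\beta_k = 0$, so the only non-zero parameter among $\beta_3, \dots, \beta_{n-1}$ is $\beta_s$ at the index $\frac{n+2}{2}$, contradicting the hypothesis that some $\beta_k \neq 0$ with $k \neq \frac{n+2}{2}$. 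In both cases $a_0 = 0$, hence $b_1 = 0$, so the algebra is characteristically nilpotent.

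The main obstacle, and the point where evenness of $n$ is essential, is the degenerate case $s = \frac{n+2}{2}$: here the coefficient $n-2s+2$ of the leading relation vanishes, so one cannot conclude $a_0 = 0$ directly from the first non-zero parameter as in the odd case treated in Theorem \ref{t1}. The resolution hinges on the arithmetic fact that the convolution threshold $2s-2$ equals exactly $n$, just past the range $k \leq n-1$, which lets the remaining relations propagate zeros to every $\beta_k$ with $k > s$ and thereby activate the hypothesis $k \neq \frac{n+2}{2}$. I expect the only delicate bookkeeping to be tracking which index ranges make the convolution sum empty; the rest is the same linear substitution used for Theorem \ref{t1}.
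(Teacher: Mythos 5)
Your proof is correct and follows the route the paper intends: the paper's proof is literally ``analogously to the proof of Theorem \ref{t1}'', i.e., use $\gamma(2b_1-na_0)=0$ to substitute $b_1=\frac{na_0}{2}$ into \eqref{E:bk} and then exploit the first non-zero parameter. You moreover supply the one detail that a literal transcription of the odd-$n$ argument would miss --- the case where the first non-zero $\beta_s$ sits at the degenerate index $s=\frac{n+2}{2}$ --- and your resolution (for $k\le n-1$ a non-zero term in the convolution sum would force $k\ge 2s-2=n$, so the sum is empty and the relations $(n-2k+2)a_0\beta_k=0$ kill every $\beta_k$ with $k>s$, contradicting the hypothesis) is exactly what is needed.
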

\begin{proof}
Analogously to the proof of Theorem \ref{t1} \end{proof}

\begin{thm}
 Let $L$ be a non-characteristically nilpotent filiform
Leibniz algebra from the family $F_2(\beta_3,\beta_4, \dots, \beta_n,
\gamma)$. If $\gamma \neq 0$ and  $n$ is even, then it is
isomorphic to one of the following pairwise non-isomorphic algebras:
\[F^1_2(0, \dots, 0, \beta_{\frac{n+2}{2}}, 0,  \dots, 0, 0, 1), \
F^2_2(0, \dots, 0, \sqrt{\frac{2}{n}}, 0,  \dots, 0, 1, 1).\]
\end{thm}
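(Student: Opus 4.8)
The strategy mirrors the odd case treated above but keeps track of the extra top parameter $\beta_n$. First I would invoke Theorem \ref{t3}: for $\gamma\neq 0$ and $n$ even, non-characteristic nilpotency forces $\beta_k=0$ for every $3\le k\le n-1$ with $k\neq m$, where $m=\frac{n+2}{2}$. Hence $L$ has the reduced form $F_2(0,\dots,0,\beta_m,0,\dots,0,\beta_n,\gamma)$, so that only $\beta_m,\beta_n,\gamma$ can be nonzero.

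Next I would read off the decisive constraint from Proposition \ref{pr2}. Since $\gamma\neq 0$, the first relation in \eqref{E:bk} gives $b_1=\frac n2 a_0$; because $m-1=\frac n2$, this makes every middle equation in \eqref{E:bk} trivially satisfied, as for $k<n$ the convolution $\sum_j\beta_{j-1}\beta_{k-j+3}$ has no surviving term (the only product $\beta_m\beta_m$ lands at index $n$, via $j=m+1$). The sole remaining equation is the $k=n$ line, which collapses to $\frac{2-n}{2}a_0\beta_n=a_1\bigl(\frac n2\beta_m^2-\gamma\bigr)$. A non-nilpotent derivation (equivalently, an admissible $a_0\neq 0$) therefore exists unless this linear relation forces $a_0=0$, which occurs exactly when $\beta_n\neq 0$ and $\frac n2\beta_m^2=\gamma$. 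Thus $L$ is non-characteristically nilpotent precisely off that locus.

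The classification up to isomorphism then comes from Theorem \ref{thm2}(b). On the reduced form the transformation formulas simplify: with $\lambda:=D/A^{n/2}$ one finds $\gamma'=\lambda^2\gamma$ and $\beta_m'=\lambda\beta_m$, and—after checking that the change of basis creates no new intermediate coordinate $\beta_k'$ and that only the index $k=m$ contributes to the $\beta_n$-formula (using $n+2-m=m$)—$\beta_n'=\frac{BD}{A^n}\bigl(\gamma-\frac n2\beta_m^2\bigr)+\frac{D}{A^{n-1}}\beta_n$. In particular $\beta_m^2/\gamma$ is an isomorphism invariant. I would normalize $\gamma'=1$ by fixing $\lambda$, which pins $\beta_m'$ down up to sign. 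When $\frac n2\beta_m^2\neq\gamma$ the coefficient of $B$ above is nonzero, so $B$ can be chosen to annihilate $\beta_n'$, yielding the family $F_2^1(0,\dots,0,\beta_m',0,\dots,0,0,1)$; when $\frac n2\beta_m^2=\gamma$ the $B$-term drops out, $\beta_n=0$ already holds by the locus found above, and the invariant takes the single value $\beta_m^2/\gamma=2/n$, producing the exceptional algebra $F_2^2$ with $\beta_m'=\sqrt{2/n}$.

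Finally, pairwise non-isomorphism follows because two normalized algebras with $\gamma'=1$ can be isomorphic only when they share $(\beta_m')^2=\beta_m^2/\gamma$; this separates the members of the $F_2^1$-family from one another and from $F_2^2$. I expect the main obstacle to be the simplification of the $\beta_n$-transformation in Theorem \ref{thm2}(b): one must verify that the long alternating sum contributes through the single term $k=m$ and equals $-\frac n2\beta_m^2\,\frac{BD}{A^n}$ (this is exactly where $n+2-m=m$ and $m-1=\frac n2$ enter), and that all intermediate coordinates $\beta_k'$ stay zero under the change of basis, so that the reduced shape is preserved throughout the normalization.
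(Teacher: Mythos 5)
Your derivation of the nilpotency constraint is the one that actually follows from Proposition \ref{pr2}: substituting $b_1=\tfrac n2 a_0$ and $\beta_k=0$ for $k\neq\tfrac{n+2}2$ into \eqref{E:bk} gives
\[
\frac{2-n}{2}\,a_0\beta_n=a_1\Bigl(\frac n2\,\beta_{\frac{n+2}{2}}^2-\gamma\Bigr),
\]
with the factor $a_1$ multiplying the whole right-hand side. The paper's proof instead works from equation \eqref{E:bna0}, in which that $a_1$ is absent from the $\beta_{\frac{n+2}{2}}^2$ term, and on that basis asserts that for \emph{any} values of $\beta_{\frac{n+2}{2}},\beta_n$ one can choose $a_0\neq0$. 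This is exactly where your argument and the paper's part ways, and the difference is not cosmetic.

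The genuine problem is that your (correct) constraint makes your conclusion inconsistent with the statement you are proving. You correctly observe that a non-nilpotent derivation exists \emph{unless} $\beta_n\neq0$ and $\tfrac n2\beta_{\frac{n+2}{2}}^2=\gamma$, so on that locus the algebra is characteristically nilpotent. But $F_2^2(0,\dots,0,\sqrt{2/n},0,\dots,0,1,1)$ sits precisely on that locus ($\tfrac n2\cdot\tfrac2n=1=\gamma$ and $\beta_n=1$), so by your own criterion it must be excluded from the list rather than included; a direct computation for $n=4$ confirms that every derivation of this algebra has $a_0=b_1=0$ and hence is nilpotent. Your closing paragraph papers over this: you state that on the locus $\tfrac n2\beta_m^2=\gamma$ ``$\beta_n=0$ already holds,'' which after normalization yields $F_2^1(0,\dots,0,\sqrt{2/n},0,\dots,0,0,1)$, and yet you then announce the exceptional algebra $F_2^2$ with $\beta_n'=1$ --- these two sentences cannot both stand. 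As written the proposal does not establish the theorem in the stated form: carried through consistently, your analysis would delete $F_2^2$ from the list and let the $F_2^1$ family run over all values of $\beta_{\frac{n+2}{2}}$ including $\sqrt{2/n}$, whereas the paper reaches the stated list only by relying on \eqref{E:bna0}. You need to resolve which of \eqref{E:bk} and \eqref{E:bna0} is correct before the classification step can be trusted.
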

\begin{proof} Let $L$ be a non-characteristically nilpotent filiform
Leibniz algebra, then by Theorem \ref{t3} we have $\beta_k = 0, 3
\leq k \leq n-1, \ k \neq \frac{n+2} 2$ and
\begin{equation} \label{E:bna0}
\frac{-n+2} 2 \beta_na_0= - a_1\gamma + \frac n 2 \beta_{\frac{n+2}{2}}^2.
\end{equation}

Since $\gamma \neq 0$, then for any values of
$\beta_{\frac{n+2}{2}}$ and $\beta_n$ there exist $a_0, a_1 \ (a_0
\neq 0)$ such that the restriction \eqref{E:bna0} is held. Therefore,
a non-characteristically nilpotent filiform Leibniz algebra of the
second family has the form \[F_2(0, \dots, 0,
\beta_{\frac{n+2}{2}}, 0, \dots, 0, \beta_n, \gamma).\]

By Theorem \ref{thm2} we have the isomorphism criterion
\[\beta'_k =0, \quad 3 \leq k \leq n-1, \  \   k \ \neq \frac {n+2} 2,\]
\[\gamma'=\frac{D^2}{A^{n}} \gamma, \quad
\beta'_{\frac{n+2}{2}}=\frac{D}{A^{\frac{n}2}}\beta_{\frac{n+2}{2}},
\quad
\beta'_n=\frac{BD\gamma}{A^{n}}+\frac{D}{A^{n-1}}\Big(\beta_n -
\frac {nB} {2A} \beta^2_{\frac{n+2}{2}}\Big).
\]

Putting $D=\frac{A^{\frac{n}2}}{\sqrt {\gamma}}$, we get
\[\gamma'=1, \quad
\beta'_{\frac{n+2}{2}}=\frac{\beta_{\frac{n+2}{2}}}{\sqrt{\gamma}},
\quad \beta'_n= \frac{1}{\sqrt{A^n\gamma}}\Big((\gamma - \frac
{n} {2} \beta^2_{\frac{n+2}{2}})B + A\beta_n \Big).\]

It is not difficult to check that $\gamma' - \frac{n} 2
\beta'^2_{\frac {n+2} 2} = \frac{D^2} {A^n} (\gamma - \frac{n} 2
\beta^2_{\frac {n+2} 2})$.

If $\gamma \neq \frac{n} 2 \beta^2_{\frac {n+2} 2}$, then putting
$B =\frac {2 A \beta_n} {2\gamma - n \beta^2_{\frac {n+2} 2}}$, we
have $\beta'_n = 0$, and so obtain the algebra \[F^1_2(0, \dots, 0,
\beta_{\frac{n+2}{2}}, 0,  \dots, 0, 0, 1),  \quad
\beta_{\frac{n+2}{2}} \neq \sqrt{\frac{2}{n}}.\]

If $\gamma = \frac{n} 2 \beta^2_{\frac {n+2} {2}}$, then we have
$\beta'_{\frac{n+2}{2}} = \sqrt{\frac {2} {n} }, \quad \beta'_n =
\frac {\beta_n} {\sqrt{A^{n-2}\gamma}}$.

For $\beta_n =0$ we obtain $\beta'_n =0$ and the algebra $F^1_2(0,
\dots, 0,  \sqrt{\frac{2}{n}}, 0,  \dots, 0, 0, 1)$.

For $\beta_n \neq 0$ we set $A= \sqrt[\uproot{4}n-2]{\frac {\beta_n^2}
{\gamma}}$. Thus, we get $\beta_n' = 1$ and the algebra $F^2_2(0,
\dots, 0,  \sqrt{\frac{2}{n}}, 0,  \dots, 0, 1, 1)$.
\end{proof}

Let us investigate the case $\gamma = 0$.

\begin{thm} Let $L$ be a non-characteristically nilpotent filiform
Leibniz algebra from $F_2(\beta_3\beta_4, \dots, \beta_n,
\gamma)$. If $\gamma =0$, then it is isomorphic to one of the
following pairwise non-isomorphic algebras:
\[F_2^j(0,\dots, 0, \overset{j}{1}, 0 \dots, 0 ,
0), \qquad 3 \leq j \leq n.\]
\end{thm}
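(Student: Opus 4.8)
The plan is to run the same machinery as in the previous two subsections: read off the non-characteristic-nilpotency condition from Proposition~\ref{pr2} with $\gamma=0$, use it to pin down the shape of $(\beta_3,\dots,\beta_n)$, and then normalize by means of the isomorphism criterion of Theorem~\ref{thm2}(b). By the remark following Proposition~\ref{pr2}, $L$ is non-characteristically nilpotent exactly when the system~\eqref{E:bk} has a solution with $(a_0,b_1)\neq(0,0)$, the diagonal of the derivation being $a_0,b_1,2a_0,\dots,na_0$. First I would dispose of the case $a_0=0$. With $\gamma=0$ and $b_1\neq0$, the relations~\eqref{E:bk} reduce to $\beta_3 b_1=0$ and $\beta_k b_1=\frac{k}{2}a_1\sum_{j=4}^{k}\beta_{j-1}\beta_{k-j+3}$, and a straightforward induction on $k$ forces $\beta_3=\beta_4=\dots=\beta_n=0$, so $L$ is the naturally graded algebra of the second family (already known to be non-characteristically nilpotent). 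Hence for the substantial part of the argument I may assume $a_0\neq0$.

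Assume now that some $\beta_k\neq0$ and let $\beta_s$ be the first nonzero parameter. Evaluating~\eqref{E:bk} at $k=s$ makes the right-hand side empty (a nonzero product $\beta_{j-1}\beta_{k-j+3}$ would require two indices both $\geq s$ yet summing to $s+2$, impossible for $s\geq3$), so $\beta_s\bigl(b_1-(s-1)a_0\bigr)=0$ yields $b_1=(s-1)a_0$. Substituting this and writing $\lambda=a_1/a_0$, the remaining equations of~\eqref{E:bk} become $(s-k)\beta_k=\frac{k}{2}\lambda\sum_{j=4}^{k}\beta_{j-1}\beta_{k-j+3}$ for $s<k\leq n$. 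Exactly the congruence argument of Lemma~\ref{l4} now applies: if both surviving factors of a product are $\equiv s \pmod{s-2}$ then so is $k$, so for $k\not\equiv s \pmod{s-2}$ the whole sum vanishes and $(s-k)\beta_k=0$ forces $\beta_k=0$. Thus the only parameters that may be nonzero sit at $k=s,\,2s-2,\,3s-4,\dots$.

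The heart of the proof is to use Theorem~\ref{thm2}(b) to remove every parameter beyond $\beta_s$ (when $2s-2>n$ no parameter beyond $\beta_s$ is in range and this step is vacuous). Since $\beta_k=0$ for $k<s$, the criterion gives $\beta'_s=\frac{D}{A^{s-1}}\beta_s$, so I fix the ratio $D/A^{s-1}=1/\beta_s$ to obtain $\beta'_s=1$; this choice does not involve $B$. I would then examine $\beta'_{2s-2}$: the congruence structure kills every intermediate contribution, leaving $\beta'_{2s-2}$ as a polynomial in $B$ whose coefficient of $B$ comes solely from the $k=s$ summand $\binom{s-1}{s-2}A^{s-2}B\beta_s$ of Theorem~\ref{thm2}(b), namely $-(s-1)A^{s-2}\beta_s\beta'_s/A^{2s-3}\neq0$. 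Being non-constant over $\mathbb{C}$, this polynomial has a root, so $B$ can be chosen with $\beta'_{2s-2}=0$ while $\beta'_s=1$ is preserved. The transformed algebra $L'$ is again non-characteristically nilpotent with first nonzero parameter $\beta'_s=1$ and $\beta'_{2s-2}=0$ (here $a_0\neq0$ for $L'$ as well, since $\beta'_s\neq0$ excludes the degenerate case). Feeding this into the recursion $(s-k)\beta'_k=\frac{k}{2}\lambda'\sum\beta'_{j-1}\beta'_{k-j+3}$ and inducting on $k$, the only nonzero parameter below any $k>s$ is $\beta'_s$, so every sum collapses and $\beta'_k=0$ for all $k>s$. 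Therefore $L\cong F_2^{s}(0,\dots,0,\overset{s}{1},0,\dots,0,0)$.

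Finally, the listed algebras are pairwise non-isomorphic because the index of the first nonzero parameter is an isomorphism invariant: $\beta'_s=\frac{D}{A^{s-1}}\beta_s$ with $D/A^{s-1}\neq0$ keeps the lower coordinates zero and $\beta'_s$ nonzero, whence $F_2^{i}\not\cong F_2^{j}$ for $i\neq j$. The step I expect to be the main obstacle is the bookkeeping inside Theorem~\ref{thm2}(b) in the previous paragraph: one must verify, for every $s$ (and in particular at the boundary $s=3$, where $2s-2=4$), that all intermediate terms in the recursive formula for $\beta'_{2s-2}$ vanish and that the surviving coefficient of $B$ is genuinely nonzero, after which the congruence pattern makes the remaining reductions automatic.
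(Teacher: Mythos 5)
Your proposal is correct and follows essentially the same route as the paper: isolate the first nonzero parameter $\beta_j$, deduce $b_1=(j-1)a_0$ with $a_0\neq 0$, use the congruence pattern modulo $j-2$, normalize $\beta_j=1$ and $\beta_{2j-2}=0$ via the choices of $D$ and $B$ in Theorem~\ref{thm2}(b), and then let the derivation relations kill the remaining parameters. The only cosmetic difference is that the paper extracts $a_1=0$ explicitly from the relation $(2-j)a_0\beta_{2j-2}=(j-1)a_1\beta_j^2$ before concluding, whereas your induction absorbs this into the collapse of the sums once $\beta'_{2j-2}=0$.
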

\begin{proof}
The restrictions \eqref{E:bk} under the conditions of the theorem have the
form
\begin{align*}
\beta_3(b_1-2a_0)& =0,\\
\beta_k(b_1-(k-1)a_0) & =\frac k 2 a_1\sum\limits_{j=4}^{k}\beta_{j-1}\beta_{k-j+3}, \qquad 4 \leq k \leq n.
\end{align*}

Let $\beta_j$ be the first non-zero parameter, i.e.  $\beta_i=0$
for $3 \leq i \leq j-1$ and $\beta_j\neq 0$. Then we have
$\beta_j(b_1-(j-1)a_0)=0$, which implies $b_1 = (j-1)a_0$. Since
the algebra is non-characteristically nilpotent, the coefficient $a_0
\neq 0$. Therefore, from the restrictions \eqref{E:bk}, we derive
\begin{align}\label{E:ba}
\beta_k & =0, \qquad  \qquad  \qquad  \qquad j+1 \leq k \leq 2j-3, \notag\\
(2-j)a_0 \beta_{2j-2}&=(j-1)a_1\beta^2_{j}, \\
\beta_k & =0, \qquad  \qquad  \qquad \qquad  k \neq t(j-2)+2, \notag \\
(t-1)(2-j)a_0 \beta_{t(j-2)+2} & = \frac
{t(j-2)+2}{2}a_1
\sum\limits_{i=1}^{t-1}\beta_{i(j-2)+2}\beta_{(t-i)(j-2)+2}\,,  \notag
\end{align}
where $t \geq 3, k \leq n$.

From Theorem \ref{thm2} we get the isomorphism criterion
\begin{align*}
\beta_k' &= 0, \quad 3 \leq k \leq j-1,   \quad  & \beta_j' &= \frac D {A^{j-1}} \beta_j, \\
 \beta_k' & = 0, \quad j+1 \leq k \leq 2j-3 ,  \quad & \beta_{2j-2}' & = \frac D {A^{2j-3}} \Big(\beta_{2j-2} - \frac {(j-1)B}{A}
\beta^2_j\Big).
\end{align*}

Putting $D = \frac{A^{j-1}}{\beta_j}$, $B=\frac{A
\beta_{2j-2}}{(j-1)\beta_j^2}$, we obtain $\beta_j'=1$,
$\beta_{2j-2}'=0$. Thus, if $\beta_i=0$ for $3 \leq i \leq j-1$
and $\beta_j\neq 0$, then, without loss of generality, we can
suppose $\beta_j=1$ and $\beta_{2j-2}=0$.

The restrictions \eqref{E:ba} imply $a_1=0$ and $(t-1)(2-j)a_0
\beta_{t(j-2)+2} = 0, \ t \geq 3$, which leads to $\beta_k = 0$ for
all $k \neq j$. Thus, we obtain the algebras $F_2^j(0, \dots, 0,
\overset{j}{1}, 0, \dots, 0, 0), \ 3 \leq j \leq n$.
\end{proof}

\subsection{Non-characteristically nilpotent filiform Leibniz algebras of the family
$F_3(\theta_1, \theta_2, \theta_3)$}

\

Since non-characteristically nilpotent filiform Lie algebras are
described in \cite{GoHa}, we will classify them only in the non-Lie case.

Let $L$ be a filiform non-Lie Leibniz algebra of the third family.
Put
\[[e_i,e_1]=-[e_1, e_i] = \beta_{i,i+2}e_{i+2} +
\beta_{i,i+3}e_{i+3} + \dots + \beta_{i,n}e_{n}, \quad 2\leq i
\leq {n-2}.\]

Using the Leibniz identity it is not difficult to obtain the
following equality:
\[[e_i, e_k] = \sum\limits_{j=i+k+1}^n\Bigg(\sum\limits_{t=0}^{k-1} (-1)^t \ \binom{k-1}{t} \
\beta_{i+t, j+1-k+t}\Bigg)e_j,
  \quad 2 \leq k, \quad k \leq i \leq n-k-1.\]
Since $[e_k, e_k] = 0$, we have $\sum\limits_{t=0}^{k-1} (-1)^t \ \dbinom{k-1}{t} \
 \beta_{k+t,k+t+2} =0, \ 2 \leq k \leq n$.

\begin{prop}\label{third class} Let $L$ be a non-characteristically nilpotent non-Lie filiform Leibniz
algebra from the family $F_3(\theta_1, \theta_2, \theta_3)$. Then
\[\beta_{i,j} =0, \quad 2 \leq i \leq n-2,  \ i+2 \leq j \leq n,\]
i.e. $[e_i, e_j] =0$, for $i+j \geq 3$.
\end{prop}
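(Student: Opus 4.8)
The plan is to compute a general derivation of $L$, translate its non-nilpotency into a constraint on its ``diagonal'', and then use the derivation identity on the products $[e_i,e_1]$ to kill every $\beta_{i,j}$. First I would write $d(e_0)=\sum_k a_ke_k$ and $d(e_1)=\sum_k b_ke_k$ and, as in Propositions \ref{pr1} and \ref{pr2}, determine $d(e_i)$ for $i\ge 2$ by induction from $e_2=[e_1,e_0]$ and $e_{i+1}=[e_i,e_0]$. The two facts I need are that $d$ preserves the lower central series, so $d(e_i)\in\langle e_i,e_{i+1},\dots,e_n\rangle$ for $i\ge 2$, and that the coefficient of $e_i$ in $d(e_i)$ equals $w_i:=(i-1)a_0+b_1$ for $i\ge 1$ (each application of $[\,\cdot\,,e_0]$ raises this weight by $a_0$, with $w_2=a_0+b_1$). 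Consequently the eigenvalues of $d$ are the scalars $w_2,\dots,w_n$ together with the two eigenvalues of the matrix $M=\left(\begin{smallmatrix}a_0&b_0\\a_1&b_1\end{smallmatrix}\right)$ giving the action on $L/L^2$; since the ground field is $\mathbb{C}$, $d$ is non-nilpotent exactly when one of these eigenvalues is nonzero.

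Next I would apply the derivation identity to $[e_i,e_1]=\sum_{j\ge i+2}\beta_{i,j}e_j$ and compare the coefficient of $e_j$ in $d([e_i,e_1])=[d(e_i),e_1]+[e_i,d(e_1)]$. Using the triangular shape of $d$ and the formula for $[e_i,e_k]$ recorded just before the statement, the off-diagonal contributions only involve $\beta_{l,j}$ with $l>i$ and $\beta_{i,m}$ with $m<j$. Ordering these data by $j$ (ascending) and then by the first index (descending) and running the corresponding double induction, all such terms are already known to vanish, and one is left with the clean relation
\[\big((j-i)a_0-b_1\big)\beta_{i,j}=0\qquad\text{for all admissible } i,j.\]
Hence, for \emph{every} derivation $d$, each nonzero $\beta_{i,j}$ would force the resonance $b_1=(j-i)a_0$.

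It remains to show that a non-nilpotent derivation cannot satisfy any such resonance, and this is where the non-Lie hypothesis $(\theta_1,\theta_2,\theta_3)\neq(0,0,0)$ is used. Applying $d$ to the defining products $[e_0,e_0]=\theta_1e_n$, $[e_1,e_1]=\theta_3e_n$, $[e_0,e_1]=-e_2+\theta_2e_n$ and $[e_i,e_{n-i}]=\alpha(-1)^ie_n$, the interior terms telescope and only the $e_n$-coefficient survives, giving scalar relations such as
\[\theta_1\big((n-3)a_0+b_1\big)=a_1\theta_2,\qquad \theta_3\big((n-1)a_0-b_1\big)=b_0\theta_2.\]
In the cleanest subcases these pin $b_1$ to a multiple of $a_0$ outside the admissible range of $j-i$: for instance if $\theta_3\neq0$ and $\theta_2=0$ the second relation forces $b_1=(n-1)a_0$, so the factor $(j-i)a_0-b_1=(j-i-n+1)a_0$ is nonzero whenever $a_0\neq0$ (as $j-i\le n-2$), and all $\beta_{i,j}=0$; if moreover $a_0=0$ then $b_1=0$, and the degenerate configuration $a_0=b_1=0$, $\det M=-a_1b_0\neq0$ must be excluded by an auxiliary derivation identity (e.g. comparing $d([e_0,e_1])$ forces $a_1=0$, making $M$ nilpotent). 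The main obstacle is precisely the full case analysis according to which of $\theta_1,\theta_2,\theta_3$ vanish — especially the case $\theta_2\neq0$, where the two relations above only express $a_1,b_0$ through $a_0,b_1$ and one must extract the missing constraint on $b_1$ from the derivation identities on $[e_0,e_i]$ and $[e_i,e_k]$ — together with the careful bookkeeping of the correction terms in the coefficient comparison of the previous paragraph. Once all $\beta_{i,j}=0$, the displayed formula for $[e_i,e_k]$ yields $[e_i,e_j]=0$ for all $i,j\ge1$ with $i+j\ge3$, which is the claim.
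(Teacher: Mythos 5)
Your strategy coincides with the paper's: triangularize a general derivation, extract the diagonal weights $(i-1)a_0+b_1$ on $e_i$, run a double induction (on the second index $j$ ascending and, within fixed $j$, taking the largest first index) to reduce each coefficient to the resonance relation $\big(b_1-(j-i)a_0\big)\beta_{i,j}=0$, and then use the scalar relations coming from $\theta_1,\theta_2,\theta_3$ to rule out every resonance $b_1=(j-i)a_0$ with $2\le j-i\le n-2$. That inductive mechanism is exactly the paper's, and your worry about a degenerate non-nilpotent $2\times 2$ block on $L/L^2$ evaporates once $b_0=0$ is established (as in the first two families): the matrix of $d$ is then genuinely triangular, so non-nilpotency is precisely $(a_0,b_1)\neq(0,0)$.

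The genuine gap is the step you yourself flag: you never pin down $b_1$ in the case $\theta_3=0$, $\theta_2\neq 0$, and the two scalar relations you list do not suffice for it. The missing ingredient is a third relation, obtained by comparing the $e_n$-coefficients in $d([e_0,e_1])=d(-e_2+\theta_2 e_n)$ after the interior terms cancel against $-d(e_2)$; it reads $2a_1\theta_3=(n-2)a_0\theta_2$. With it the case analysis closes at once: if $\theta_3\neq0$ then $b_1=(n-1)a_0$ with $a_0\neq0$ (else $b_1=0$ too); if $\theta_3=0$ and $\theta_2\neq0$ then $(n-2)a_0\theta_2=0$ forces $a_0=0$, so the resonance factor is just $b_1\neq0$; if $\theta_2=\theta_3=0$ then $\theta_1\neq0$ and the first relation gives $b_1=-(n-3)a_0$. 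In every case $b_1-(j-i)a_0\neq0$ for $2\le j-i\le n-2$, which is what the induction needs. In particular, no identities on $[e_0,e_i]$ or $[e_i,e_k]$ for $i\ge2$ are required for this step --- only the three products $[e_0,e_0]$, $[e_0,e_1]$, $[e_1,e_1]$. A further small omission: for odd $n$ the third family carries the extra products $[e_i,e_{n-i}]=\alpha(-1)^ie_n$, which enter the coefficient comparison at $e_n$; the argument survives for both $\alpha=0$ and $\alpha=1$, but your bookkeeping should account for them explicitly.
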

\begin{proof} Let $L$ be a non-characteristically nilpotent non-Lie filiform Leibniz
algebra from the family $F_3(\theta_1, \theta_2, \theta_3)$ and
let $d$ be a derivation of $L$.

Put
\[d(e_0) = \sum\limits_{k=0}^na_ke_k, \quad d(e_1) = \sum\limits_{k=0}^nb_ke_k.\]

Similarly as above we establish $b_0=0$.

From the property of the derivation we have
\[d(e_i) = \big((i-1)a_0+b_1\big)e_i \ (\bmod  \ L^{i+1}), \quad 2 \leq i \leq n.\]

Consider the equalities
\begin{align*}
d([e_0, e_0]) & = [d(e_0), e_0] +  [e_0, d(e_0)] =\big[\sum\limits_{k=0}^na_ke_k, e_0\big]+\big[e_0, \sum\limits_{k=0}^na_ke_k\big]  \\
& {} = a_0[e_0, e_0] + a_1[e_1, e_0]+ a_0[e_0, e_0] + a_1[e_0, e_1] = (2a_0\theta_{1} + a_1\theta_{2})e_n.
\end{align*}

On the other hand,
\[d([e_0, e_0]) = \theta_{1}d(e_n) = \theta_{1}\big((n-1)a_0+b_1\big)e_n.\]

Consequently,
\[\theta_{1}\big((n-3)a_0+b_1\big) = a_1\theta_{2}.\]

Consider the property of the derivation for the product $[e_0, e_1]$,
\begin{align*}
&d([e_0, e_1]) = [d(e_0), e_1] +  [e_0, d(e_1)] =
a_0[e_0, e_1] +  a_1[e_1, e_1] + \big[\sum\limits_{k=2}^na_ke_k, e_1\big] + b_1[e_0, e_1] +
\big[e_0, \sum\limits_{k=2}^nb_ke_k\big]\\
&{} = -a_0[e_1, e_0] - a_1[e_1, e_1] - \big[e_1,
\sum\limits_{k=2}^na_ke_k\big] +a_0\theta_2e_n + 2a_1\theta_3e_n
-b_1[e_1, e_0] - \big[\sum\limits_{k=2}^nb_ke_k, e_0\big] +
b_1\theta_2e_n \\
&{} = - \big[e_1,
\sum\limits_{k=0}^na_ke_k\big] - \big[\sum\limits_{k=1}^nb_ke_k, e_0\big]
+(a_0\theta_2 + b_1\theta_2 + 2a_1\theta_3)e_n= -d(e_2)+
(a_0\theta_2 + b_1\theta_2 + 2a_1\theta_3)e_n.
\end{align*}

On the other hand,
\[d([e_0, e_1]) = d(-e_2 + \theta_2e_n) = - d(e_2) + \theta_2d(e_n) =
- d(e_2) + \theta_2\big((n-1)a_0 + b_1\big)e_n.\]

Therefore, $2a_1\theta_{3}= (n-2)a_0\theta_{2}$.

Similarly, from
\[d([e_1, e_1]) = \big[\sum\limits_{k=1}^nb_ke_k, e_1\big] +  \big[e_1, \sum\limits_{k=1}^nb_ke_k\big] =[b_1e_1, e_1]+[e_1, b_1e_1]  = 2b_1\theta_{3}e_n\]
and
\[d([e_1, e_1]) = d(\theta_{3}e_n) =  \theta_{3} \big((n-1)a_0 + b_1\big)e_n\]
we conclude that $\theta_{3}\big((n-1)a_0 - b_1\big) =0$.

Thus, we obtain
\[\theta_{1}\big((n-3)a_0+b_1\big) = a_1\theta_{2}, \quad 2a_1\theta_{3}=
(n-2)a_0\theta_{2}, \quad \theta_{3}\big((n-1)a_0 - b_1\big) =0.\]

Note that for a non-Lie Leibniz algebra we have  $(\theta_{1},
\theta_{2}, \theta_{3}) \neq (0, 0, 0)$.

If $\theta_{3} \neq 0$, then $b_1 = (n-1)a_0, \ a_1 = \frac
{(n-2)a_0\theta_{2}} {2\theta_{3}}$ and $(2n-4)a_0\theta_1 = \frac
{(n-2)a_0\theta_{2}^2} {2\theta_{3}}$.

Note that $a_0 \neq 0$ (since $L$ is non-characteristically
nilpotent). Therefore, we deduce
\[\theta_1 = \frac {\theta_{2}^2} {4\theta_{3}}.\]

If $\theta_{3} = 0$, then $\theta_{1}\big((n-3)a_0+b_1\big) =
a_1\theta_{2}, \ (n-2)a_0\theta_{2} =0$.

If $\theta_{2} \neq 0$, then $a_0 =0$, and $\theta_{1}b_1 =
a_1\theta_{2}$.

If $\theta_{2} = 0$, then $\theta_{1} \neq 0$, and $b_1 =
-(n-3)a_0$.

Thus, on the behavior of the parameters $\theta_{1}, \theta_{2}$
and $\theta_{3}$ we obtain the following equalities: \[\ b_1 =
(n-1)a_0, \quad a_0 =0, \quad b_1 = -(n-3)a_0.\]

Now we shall prove $\beta_{i,j} =0, \quad 4 \leq j \leq n, \ 2
\leq i \leq j-2$, by induction on $j$ for any values of $i$.

Consider the property of the derivation for the product $[e_2, e_1]$,
\begin{align*}
d([e_2, e_1]) &= [d(e_2), e_1] +  [e_2, d(e_1)] =
[(a_0+b_1)e_2 + x_3, e_1]+\big[e_2, \sum\limits_{k=1}^nb_ke_k\big] \\
&{}=(a_0+2b_1)\beta_{2,4}e_{4} + x_{5}.
\end{align*}

On the other hand,
\[d([e_2, e_1]) =\beta_{2,4}d(e_{4}) + \beta_{2,5}d(e_{5}) + \dots + \beta_{2,n}d(e_{n}) =
(3a_0+b_1)\beta_{2,4}e_4+ y_5,\] where $x_3 \in L^3$ and $x_5, y_5
\in L^5$.

Comparing the coefficients at the basic element $e_4$, we obtain
\[\beta_{2,4}(b_1-2a_0) = 0.\]

Since $b_1 = (n-1)a_0$ or $a_0 =0$ or $b_1 = -(n-3)a_0$ and $(a_0,
b_1) \neq (0,0)$, we have $\beta_{2,4}=0$. Thus, we proved the
statement of the proposition for $j=4$.

Let the induction hypothesis be true for $j\leq k \leq n-1$, i.e.
$\beta_{i,j} = 0$ for $4 \leq j \leq k, \ 2 \leq i \leq j-2$,
which implies $[L^{i_0}, e_1] \subseteq L^{k+1}$, $[L^{i_0+1},
e_1] \subseteq L^{k+2}$. We will prove $\beta_{i,k+1} = 0$ for $2
\leq i \leq k-1$.

Let us suppose the contrary, i.e. there exists $i$ such that
$\beta_{i,k+1} \neq 0$. Let $i_0$ be the greatest number among
indexes such that $\beta_{i,k+1} \neq 0$.

Again, consider the property of the derivation
\[d([e_{i_0}, e_1]) = [d(e_{i_0}), e_1] +  [e_{i_0}, d(e_1)]
=\big((i_0-1)a_0+2b_1\big)\beta_{i_0,k+1}e_{k+1} + x_{k+2}.\]

On the other hand,
\[d([e_{i_0}, e_1]) =\sum_{j=k+1}^{n}\beta_{i_0,j}d(e_{j})=
(ka_0+b_1)\beta_{{i_0},k+1}e_{k+1}+ y_{k+2},\] where  $x_{k+2},
y_{k+2} \in L^{k+2}$.

Comparing the coefficients at the basic element $e_{k+1}$, we get
$\beta_{{i_0},k+1}(b_1-(k+1-{i_0})a_0) = 0$. Since $2 \leq {i_0}
\leq k-1$, we have $2 \leq k+1-{i_0} \leq k-1$.

Taking into account $k+1\leq n$ and the correctness of  one of the
following conditions:  \[b_1 = (n-1)a_0, \quad  a_0 =0, \quad  b_1 =
-(n-3)a_0,\] we deduce $\beta_{{i_0},k+1}=0$ for $2 \leq i \leq
k-1$, which is a contradiction with the assumption $\beta_{i,k+1} \neq
0$. Thus, $\beta_{i,k+1} = 0$ and we proved that $\beta_{{i},j} =
0$ for all $i,j$.
\end{proof}

\begin{rem} The proof of Proposition \ref{third class} shows
that the cases $\alpha =0$ and $\alpha =1$ are proved analogously.
\end{rem}

Proposition \ref{third class} implies that the table of
multiplication of a non-characteristically nilpotent filiform
Leibniz algebra from the third family has the form:
\[F_3(\theta_1, \theta_2,\theta_3)=\left\{\begin{array}{lll} [e_i,e_0]=e_{i+1}, &
1\leq i \leq {n-1},\\[1mm]
[e_0,e_i]=-e_{i+1}, & 2\leq i \leq {n-1}, \\[1mm]
[e_0,e_0]=\theta_1e_n, &   \\[1mm]
[e_0,e_1]=-e_2+\theta_2e_n, & \\[1mm]
[e_1,e_1]=\theta_3e_n, &  \\[1mm]
[e_i,e_{n-i}]=-[e_{n-i},e_i]=\alpha (-1)^{i}e_n, & 1\leq i\leq
n-1.
\end{array} \right.\]

For such algebras in \cite{OmRa} it is obtained the isomorphism
criterion:
\begin{equation}\label{E:crit}
\theta_1' = \frac{A_0^2 \theta_1 + A_0A_1 \theta_2 + A_1^2
\theta_3}{A_0^{n-1}B_1}, \qquad
\theta_2'=\frac{A_0\theta_2+2A_1\theta_3}{A_0^{n-1}}, \qquad
\theta_3'=\frac{B_1\theta_3}{A_0^{n-1}}.
\end{equation}

\begin{thm} Let $L$ be a non-characteristically nilpotent filiform
Leibniz algebra from $F_3(\theta_1, \theta_2,\theta_3)$. Then it
is isomorphic to one of the following pairwise non-isomorphic
algebras
\[F_3^1(1,0,0), \quad F_3^2(0,1,0), \quad F_3^3(0,0,1).\]
\end{thm}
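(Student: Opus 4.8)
The plan is to combine the structural reduction already in hand with the explicit isomorphism action \eqref{E:crit}. By Proposition \ref{third class}, a non-characteristically nilpotent algebra of the third family has the simplified multiplication table displayed just after that proposition, so up to the term governed by $\alpha$ (which plays no role in \eqref{E:crit}) it is completely determined by the triple $(\theta_1,\theta_2,\theta_3)$. Hence the whole problem reduces to choosing orbit representatives for the action of the transformation parameters $A_0,A_1,B_1$ (with $A_0B_1\neq 0$) on such triples via \eqref{E:crit}, restricted to those triples that genuinely arise from a non-nilpotent derivation.

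First I would recall the constraints on $(\theta_1,\theta_2,\theta_3)$ extracted in the proof of Proposition \ref{third class}. They separate into three mutually exclusive cases: (i) $\theta_3\neq 0$, where the existence of a derivation with $a_0\neq 0$ forces $\theta_1=\frac{\theta_2^2}{4\theta_3}$; (ii) $\theta_3=0$ and $\theta_2\neq 0$; and (iii) $\theta_3=\theta_2=0$, where the non-Lie hypothesis gives $\theta_1\neq 0$. I would handle each case separately, normalizing by a suitable choice of $A_0,A_1,B_1$.

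Then in each case I would produce explicit parameters. In case (iii) only $\theta_1'=\theta_1/(A_0^{\,n-3}B_1)$ survives, so taking $A_0=1$ and $B_1=\theta_1$ gives $F_3^1(1,0,0)$. In case (ii), $\theta_3'=0$ automatically; choosing $A_0=\sqrt[\uproot{3}n-2]{\theta_2}$ normalizes $\theta_2'=1$, and then $A_1=-A_0\theta_1/\theta_2$ kills $\theta_1'$, producing $F_3^2(0,1,0)$. In case (i) I would first set $A_1=-A_0\theta_2/(2\theta_3)$ to force $\theta_2'=0$ and then $B_1=A_0^{\,n-1}/\theta_3$ to force $\theta_3'=1$; the one substantive point is the completing-the-square computation showing that, under the constraint $\theta_1=\frac{\theta_2^2}{4\theta_3}$, the numerator $A_0^2\theta_1+A_0A_1\theta_2+A_1^2\theta_3$ of $\theta_1'$ vanishes identically, leaving $F_3^3(0,0,1)$. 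In every case one checks $A_0B_1\neq 0$, so the transformations are genuine isomorphisms.

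Finally I would read off invariants from \eqref{E:crit} to get pairwise non-isomorphism. Since $A_0B_1\neq 0$, the relation $\theta_3'=B_1\theta_3/A_0^{n-1}$ shows the vanishing of $\theta_3$ is preserved, and when $\theta_3=0$ the relation $\theta_2'=A_0\theta_2/A_0^{n-1}$ shows the vanishing of $\theta_2$ is preserved. Thus the three representatives occupy distinct strata ($\theta_3\neq 0$; $\theta_3=0\neq\theta_2$; $\theta_2=\theta_3=0\neq\theta_1$) and cannot be isomorphic. The only delicate step is the algebraic identity in case (i); the remainder is organizing the three cases and selecting the transformation parameters.
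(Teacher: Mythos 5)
Your proposal is correct and follows essentially the same route as the paper: the same three-case split on $(\theta_1,\theta_2,\theta_3)$ using the constraints from Proposition \ref{third class}, and the same (up to trivial rescaling) choices of $A_0$, $A_1$, $B_1$ in the criterion \eqref{E:crit}, including the completing-the-square cancellation when $\theta_3\neq 0$. Your explicit invariant argument for pairwise non-isomorphism is a small addition the paper leaves implicit, but it does not change the approach.
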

\begin{proof}
Consider several cases.

\textbf{Case 1.} Let $\theta_{3} = 0$ and $\theta_{2} = 0$. Then
$\theta_1 \neq 0$ and
\[\theta_1' = \frac{\theta_1 }{A_0^{n-3}B_1}, \quad
\theta_2'=\theta_3'=0.
\]

Putting $B_1 = \frac{\theta_1}{A_0^{n-3}}$, we have $\theta_1'=1$
and obtain the algebra $F_3^1(1,0,0)$.

\textbf{Case 2.} Let $\theta_{3} = 0$ and $\theta_{2} \neq 0$. Then
we have
\[\theta_1' = \frac{A_0 \theta_1 + A_1 \theta_2}{A_0^{n-2}B_1}, \quad
\theta_2'=\frac{\theta_2}{A_0^{n-2}}, \quad \theta_3'=0.
\]

Putting $A_0 = \sqrt[\uproot{3}n-2]{\theta_2}$, $A_1 = -\frac{A_0
\theta_1}{\theta_2}$, we have $\theta_1'=0, \theta_2'=1$ and
obtain the algebra $F_3^2(0, 1, 0)$.

\textbf{Case 3.} Let $\theta_3 \neq 0$. Then similarly as in the
proof of Proposition \ref{third class} we conclude $\theta_1 =
\frac {\theta_{2}^2} {4\theta_{3} }$. Then in the isomorphism
criterion \eqref{E:crit} putting $B_1 = \frac{A_0^{n-1}}{\theta_3}$, $A_1 =
- \frac{A_0 \theta_2}{2\theta_3}$, we get
\[\theta_1'=\theta_2'=0, \quad \theta_3'=1.\]

Thus, in this case we have the algebra $F_3^3(0, 0, 1)$.

The non-characteristically property of obtained algebras follows
from the proof of Proposition \ref{third class}.
\end{proof}

\section*{ Acknowledgements}

The second and third authors were supported by MICINN, grant MTM 2009-14464-C02
(Spain) (European FEDER support included), and by Xunta de Galicia,
grant Incite09 207 215PR.


\end{document}